\def\rr{{\mathbb R}}
\def\rn{{{\rr}^n}}
\def\supp{{\mathop\mathrm{\,supp\,}}}
\def\ls{\lesssim}
\def\gs{\gtrsim}
\def\beeqn{\begin{equation}}
\def\eneqn{\end{equation}}
\def\beeqns{\begin{equation*}}
\def\eneqns{\end{equation*}}
\def\beeqa{\begin{eqnarray}}
\def\eneqa{\begin{eqnarray}}
\def\beeqas{\begin{eqnarray*}}
\def\eneqas{\begin{eqnarray*}}
\def\besp{\begin{split}}
\def\ensp{\begin{split}}
\newtheorem{thm}{Theorem}[section]
\newtheorem{lem}[thm]{Lemma}%[section]
\newtheorem{rem}[thm]{Remark}%[section]
\newtheorem{cor}[thm]{Corollary}%[section]
\newtheorem{defn}[thm]{Definition}%[section]
\numberwithin{equation}{section}
\begin{document}

\title{\Large\bf  The boundedness of commutators of sublinear operators on Herz Triebel-Lizorkin spaces with variable
exponent}
\author{CHENGLONG FANG, YINGYING WEI AND JING ZHANG\footnote{Corresponding author}}

%\medskip
\date{}
\maketitle

{\noindent  {\bf Abstract:} In this paper,  the authors first discuss the characterization of Herz Triebel-Lizorkin spaces
with variable exponent via two families of operators. By this characterization, the authors prove that the Lipschitz commutators of sublinear operators is bounded from Herz spaces with variable exponent to Herz Triebel-Lizorkin spaces with variable exponent. As an application, the corresponding boundedness estimates for the commutators of maximal operator, Riesz potential operator and Calder\'{o}n-Zygmund operator are established.}

\bigskip
\bigskip

{ {\it Key words}:\quad commutator; sublinear operator; Lipschitz spaces; Herz Triebel-Lizorkin spaces; variable exponent}

\medskip

\section{Introduction and statement of main result}
\quad
%Motivated by developments in the electrorheological Fluids and the different equation with non-standard growth,
%many literatures focus on the research of function spaces with variable exponent in harmonic analysis,
%see (\cite{CFMP}-\cite{DHHR}, \cite{FC1}-\cite{IM3}, \cite{SCJS}) and their references.

Given a locally integrable function $b$, the commutator $[b,T]$ is defined by
\begin{equation*}
[b,T]f=bTf-T(bf)
\end{equation*}
for suitable functions $f$. In 1995,  Paluszynski \cite{MP} proved that $b$ belongs to the Lipschitz spaces if and only if the commutator $[b,T]$ is bounded from Lebesgue spaces to Triebel-Lizorkin spaces, where $T$ is a Calder\'on-Zygmund  singular integral operator.  As we know, the Herz spaces is the Lebesgue spaces with power weights $|x|^\alpha$. In recent years, the Herz-type spaces have been paid more and more attention; see, for example, (\cite{FC2}-\cite{LY},\cite{SCJS}-\cite{XJ1}) and references therein. More recently, replacing Triebel-Lizorkin spaces by Herz Triebel-Lizorkin spaces in the results of Paluszynski,
Fang and Zhou \cite{FC2} obtained that $[b,T]$ is bounded from Herz spaces to Herz Triebel-Lizorkin spaces,
where $b$ belongs to the Lipschitz spaces and $T$ is a sublinear operators.
Inspired by \cite{FC2} and \cite{MP}, in this article,
the authors show the boundedness of Lipschitz commutators of sublinear operators on Herz Triebel-Lizorkin spaces with variable exponent.
To this ends,  the authors discuss the characterizations of the Herz Triebel-Lizorkin spaces with variable exponent.

To state our main results, we first recall some definitions and  notations.

\begin{defn}\label{def-Lip}
For $0<\beta<1$, the Lipschitz spaces $\dot{\Lambda}_{\beta}$ is the space of functions $f$ such that
$$\|f\|_{\dot{\Lambda}_{\beta}}=\sup_{x,h\in\mathbb{R}^{n},~h\neq0}\frac{|f(x+h)-f(x)|}{|h|^{\beta}}<\infty.$$
\end{defn}

\begin{defn}\label{D1}
Let $\Omega$ be a measurable set in $\mathbb{R}^{n}$ with $|\Omega|>0$. For a measurable function $p(\cdot):\Omega\rightarrow[1,\infty)$,
the variable exponent Lebesgue space $L^{p(\cdot)}(\Omega)$ is defined by
\begin{equation*}
L^{p(\cdot)}(\Omega):=\left\{f~is~ measurable: \rho_{p(\cdot)}(f/\lambda)<\infty ~for~some~constant~\lambda>0\right\},
\end{equation*}
where
\begin{equation*}
\rho_{p(\cdot)}(f):=\int_{\Omega}|f(x)|^{p(x)}dx.
\end{equation*}
The norm of $f$ in the variable exponent Lebesgue space $L^{p(\cdot)}(\Omega)$ is denoted by
\begin{equation*}
\|f\|_{L^{p(\cdot)}(\Omega)}:=\inf\left\{\lambda>0:\rho_{p(\cdot)}(f/\lambda)\leq1\right\}.
\end{equation*}
Then $L^{p(\cdot)}(\Omega)$ is a Banach space with the norm $\|\cdot\|_{L^{p(\cdot)}(\Omega)}$. Denote
\begin{equation*}
p^{-}:=ess\inf\{p(x):x\in\mathbb{R}^{n}\} \quad \text{and} \quad p^{+}:=ess\sup\{p(x):x\in\mathbb{R}^{n}\}.
\end{equation*}
Throughout the whole article, the set $\mathscr{P}(\mathbb{R}^{n})$ consists of all $p(\cdot)$ satisfying $p^{-}>1$~and~$p^{+}<\infty$.
\end{defn}

Given a locally integrable function $f$, the Hardy-Littlewood maximal operator $M$ is defined by
\begin{align}\label{def-M}
Mf(x):=\sup_{r>0}r^{-n}\int_{B(x,r)}|f(y)|dy \quad \text{for all} \  x\in\mathbb{R}^{n}.
\end{align}
Here and in the future,
\begin{equation*}
B(x,r):=\{y\in\mathbb{R}^{n}:|x-y|<r\}.
\end{equation*}
Let $\mathscr{B}(\mathbb{R}^{n})$ be the set of $p(\cdot)\in\mathscr{P}(\mathbb{R}^{n})$ satisfying the condition that $M$ is bounded on $L^{p(\cdot)}(\mathbb{R}^{n})$.
It is well known that if $p(\cdot)\in\mathscr{P}(\mathbb{R}^{n})$, then $p(\cdot)\in\mathscr{B}(\mathbb{R}^{n})$ is equivalent to $p'(\cdot)\in\mathscr{B}(\mathbb{R}^{n})$, where $1/p'(\cdot)=1-1/p(\cdot)$.
When $p(\cdot)\in\mathscr{B}(\mathbb{R}^{n})$, Izuki \cite{IM1} proved that there exist $\delta_{1} ,\delta_{2}\in(0,1)$ depending only on $p(\cdot)$ and $n$ such that for ball $B$ in $\mathbb{R}^{n}$ and all measurable subsets $S\subset B$,
\begin{equation}\label{1.1}
\frac{\|\chi_{S}\|_{L^{p(\cdot)}(\mathbb{R}^{n})}}{\|\chi_{B}\|_{L^{p(\cdot)}(\mathbb{R}^{n})}}\leq C\left(\frac{|S|}{|B|}\right)^{\delta_{1}}, \
\frac{\|\chi_{S}\|_{L^{p'(\cdot)}(\mathbb{R}^{n})}}{\|\chi_{B}\|_{L^{p'(\cdot)}(\mathbb{R}^{n})}}\leq C\left(\frac{|S|}{|B|}\right)^{\delta_{2}}.
\end{equation}

Now we recall the definitions of Herz spaces with variable exponents and Herz Triebel-Lizorkin spaces with variable exponents.
Set $B_{k}=B(0, 2^{k})=\{{x\in  \mathbb{R}^{n}:|x|<2^{k}}\}$, $E_{k}=B_{k}\backslash B_{k-1}$ and $\chi_{k}=\chi_{E_{k}}$ for $k\in\mathbb{Z}$.
For $m\in\mathbb{N}_{0}=\mathbb{N}\cup \{0\}$,  define
\begin{equation*}
\tilde{\chi}_{m}=
\begin{cases}
\chi_{E_{m}},\ &m\geq1,\\
\chi_{B_{0}},\ &m=0.
\end{cases}
\end{equation*}

\begin{defn}\label{D2}
Let $\alpha\in\mathbb{R}$, $0<q\leq\infty$ and $p(\cdot)\in\mathscr{P}(\mathbb{R}^{n})$.
\begin{enumerate}
\item[\rm (i)]
The homogeneous Herz spaces with variable exponents $\dot{K}^{\alpha,p}_{p(\cdot)}(\mathbb{R}^{n})$ is defined by
\begin{equation*}
\dot{K}^{\alpha,q}_{p(\cdot)}(\mathbb{R}^{n})=\left\{f\in L^{p(\cdot)}_{loc}(\mathbb{R}^{n}\backslash \{0\}):\|f\|_{\dot{K}^{\alpha,q}_{p(\cdot)}(\mathbb{R}^{n})}<\infty\right\},
\end{equation*}
where
\begin{equation*}
\|f\|_{\dot{K}^{\alpha,q}_{p(\cdot)}(\mathbb{R}^{n})}=\left\{\sum^{\infty}_{l=-\infty}(2^{\alpha l}\|f\chi_{l}\|_{L^{p(\cdot)}(\mathbb{R}^{n})})^{q}\right\}^{1/q}.
\end{equation*}

\item[\rm (ii)]
The non-homogeneous Herz spaces with variable exponents $K^{\alpha,p}_{p(\cdot)}(\mathbb{R}^{n})$ is defined by
\begin{equation*}
K^{\alpha,q}_{p(\cdot)}(\mathbb{R}^{n})=\left\{f\in L^{p(\cdot)}_{loc}(\mathbb{R}^{n}):\|f\|_{K^{\alpha,q}_{p(\cdot)}(\mathbb{R}^{n})}<\infty\right\},
\end{equation*}
where
\begin{equation*}
\|f\|_{K^{\alpha,q}_{p(\cdot)}(\mathbb{R}^{n})}=\left\{\sum^{\infty}_{m=0}(2^{\alpha m}\|f\tilde{\chi}_{m}\|_{L^{p(\cdot)}(\mathbb{R}^{n})})^{q}\right\}^{1/q}.
\end{equation*}
\end{enumerate}
\end{defn}

Let $\mathscr{S}(\mathbb{R}^{n})$ be the Schwartz space on $ \mathbb{R}^{n}$ and $\mathscr{S}'(\mathbb{R}^{n})$  be the dual space of $\mathscr{S}(\mathbb{R}^{n})$. The Fourier transform of a tempered distribution $f$ is denoted by $\hat{f}$,~while its inverse transform is denoted by $\check{f}$. Assume that $\varphi_{0},\varphi\in \mathscr{S}(\mathbb{R}^{n})$ with $\varphi_{0}\geq0$ satisfying the following conditions:
\begin{equation*}
\varphi_{0}(x)=
\begin{cases}
1,\ |x|\leq1,\\
0,\ |x|\geq2.
\end{cases}
\end{equation*}
For $x\in \mathbb{R}^{n}$, let $\varphi(x)=\varphi_{0}(x)-\varphi_{0}(2x)$ and  $\varphi_{j}(x)=\varphi(2^{-j}x)$ with $j\in\mathbb{Z}$. Then we call $\{\varphi_{j}\}_{j\in\mathbb{Z}}$ a resolution of unity. The following definition comes from \cite{SCJS}.

\begin{defn}
Let $\alpha, \beta\in \mathbb{R}$, $0<r, q\leq\infty$, $p(\cdot)\in\mathscr{P}(\mathbb{R}^{n})$ and $\{\varphi_{j}\}_{j\in \mathbb{Z}}$ be resolution of unity.
\begin{enumerate}
\item[\rm (i)]
The homogeneous Herz Triebel-Lizorkin spaces with variable exponent $\dot{K}^{\alpha,q}_{p(\cdot)}\dot{F}^{\beta}_{r}(\mathbb{R}^{n})$ is defined by
\begin{equation*}
\left\{f\in \mathscr{S}(\mathbb{R}^{n}): \|f\|_{\dot{K}^{\alpha,q}_{p(\cdot)}\dot{F}^{\beta}_{r}(\mathbb{R}^{n})}<\infty\right\},
\end{equation*}
where
\begin{equation*}
\|f\|_{\dot{K}^{\alpha,q}_{p(\cdot)}\dot{F}^{\beta}_{r}(\mathbb{R}^{n})}
=\left\|\left(\sum^{\infty}_{j=-\infty}\left|2^{\beta j}{\varphi_{j}}^{\vee}\ast f\right|^{r}\right)^{1/r}\right\|_{\dot{K}^{\alpha,q}_{p(\cdot)}}.
\end{equation*}

\item[\rm (ii)]
The non-homogeneous Herz Triebel-Lizorkin spaces with variable exponent $K^{\alpha,q}_{p(\cdot)}\dot{F}^{\beta}_{r}(\mathbb{R}^{n})$ is defined by
\begin{equation*}
\left\{f\in \mathscr{S}(\mathbb{R}^{n}): \|f\|_{K^{\alpha,q}_{p(\cdot)}\dot{F}^{\beta}_{r}(\mathbb{R}^{n})}<\infty\right\},
\end{equation*}
where
\begin{equation*}
\|f\|_{K^{\alpha,q}_{p(\cdot)}\dot{F}^{\beta}_{r}(\mathbb{R}^{n})}
=\left\|\left(\sum^{\infty}_{j=-\infty}\left|2^{\beta j}{\varphi_{j}}^{\vee}\ast f\right|^{r}\right)^{1/r}\right\|_{K^{\alpha,q}_{p(\cdot)}}.
\end{equation*}
\end{enumerate}
\end{defn}

Note that a operator $T$ is called a  sublinear operator if, for any integrable functions $f,g$ and constant $\lambda\in \mathbb{R}$, $T$ stisfying
\begin{align}\label{sub-def}
T(f+g)\leq Tf+Tg \quad \text{and} \quad \lambda Tf{\leq T(\lambda f)}.
\end{align}
Based on the above definition, we now give the main results of this paper as follows.
\begin{thm}\label{H1}
Let $\alpha\in \mathbb{R}$, $0<q\leq\infty$ and $p(\cdot)\in\mathscr{B}(\mathbb{R}^{n})$.
Suppose that $\delta_{1}, \delta_{2}\in(0,1)$ satisfy (\ref{1.1}) and $-n\delta_{1}<\alpha<n\delta_{2}$.
Assume further that $b\in \dot{\Lambda}_{\beta}$ with $0<\beta<1$.
If a sublinear operator $T$ satisfies the size condition
\begin{align}\label{size-1}
|Tf(x)|\leq C\int_{\mathbb{R}^{n}}\frac{|f(y)|}{|x-y|^{n}}dy,~x\not\in\supp f,
\end{align}
for any integrable function $f$ with compact support and $T$ is bounded on $L^{p(\cdot)}(\mathbb{R}^{n})$,
then
\begin{enumerate}
\item[\rm (i)]
$[b,T]$ is a bounded from $\dot{K}^{\alpha,q}_{p(\cdot)}(\mathbb{R}^{n})$ to $\dot{K}^{\alpha,q}_{p(\cdot)}\dot{F}^{\beta}_{\infty}(\mathbb{R}^{n})$;

\item[\rm (ii)]
$[b,T]$ is a bounded from $K^{\alpha,q}_{p(\cdot)}(\mathbb{R}^{n})$ to $K^{\alpha,q}_{p(\cdot)}\dot{F}^{\beta}_{\infty}(\mathbb{R}^{n})$.
\end{enumerate}
\end{thm}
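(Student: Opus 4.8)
The plan is to reduce the statement to a weighted mean-oscillation estimate and then to run the usual ``near/far'' argument on Herz annuli; after the customary density reduction all the objects below are well defined. The starting point is the equivalent norm on $\dot{K}^{\alpha,q}_{p(\cdot)}\dot{F}^{\beta}_{\infty}(\rn)$ coming from the local-mean-oscillation family (one of the two families of operators treated here): there is $C>0$ such that, for every $g$,
\[
\|g\|_{\dot{K}^{\alpha,q}_{p(\cdot)}\dot{F}^{\beta}_{\infty}(\rn)}\leq C\,\big\|M^{\sharp}_{\beta}g\big\|_{\dot{K}^{\alpha,q}_{p(\cdot)}(\rn)},\qquad M^{\sharp}_{\beta}g(x):=\sup_{B\ni x}\frac{1}{|B|^{1+\beta/n}}\int_{B}|g(y)-g_{B}|\,dy ,
\]
the supremum running over balls $B\subset\rn$ containing $x$, and $g_{B}$ the average of $g$ on $B$. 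So for (i) it suffices to prove $\big\|M^{\sharp}_{\beta}([b,T]f)\big\|_{\dot{K}^{\alpha,q}_{p(\cdot)}(\rn)}\ls\|b\|_{\dot{\Lambda}_{\beta}}\|f\|_{\dot{K}^{\alpha,q}_{p(\cdot)}(\rn)}$, and for (ii) the same inequality with $K^{\alpha,q}_{p(\cdot)}$ in place of $\dot{K}^{\alpha,q}_{p(\cdot)}$; I will describe only (i), the non-homogeneous case being identical once $\chi_{k}$ is replaced by $\tilde{\chi}_{m}$ and the indices run over $\nn_{0}$.

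Next I would expand the commutator at each ball. Fix $B=B(x_{0},r)$, $x\in B$, write $b_{B}$ for the average of $b$ on $B$, and split $f=f\chi_{2B}+f\chi_{(2B)^{c}}=:f^{0}+f^{\infty}$. Using sublinearity of $T$ together with the pointwise commutator identity, one gets, for $y\in B$ and suitable constants $c_{B},c_{B}'$,
\[
\big|[b,T]f(y)-c_{B}\big|\ls|b(y)-b_{B}|\,|Tf^{0}(y)|+|b(y)-b_{B}|\,|Tf^{\infty}(y)|+\big|T((b-b_{B})f^{0})(y)\big|+\big|T((b-b_{B})f^{\infty})(y)-c_{B}'\big| ,
\]
which, after averaging over $B$ and multiplying by $|B|^{-\beta/n}$, decomposes $M^{\sharp}_{\beta}([b,T]f)(x)$ into four terms. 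Since $|b(y)-b_{B}|\ls\|b\|_{\dot{\Lambda}_{\beta}}r^{\beta}\sim\|b\|_{\dot{\Lambda}_{\beta}}|B|^{\beta/n}$ for $y\in B$, the normalizing factor $|B|^{-\beta/n}$ is absorbed in the first two terms and only the $B$-averages of $|Tf^{0}|$ and $|Tf^{\infty}|$ survive; in the third term one uses in addition $\|(b-b_{B})\chi_{2B}\|_{L^{\infty}}\ls\|b\|_{\dot{\Lambda}_{\beta}}|B|^{\beta/n}$ together with H\"older's inequality in $L^{p(\cdot)}(\rn)$, the $L^{p(\cdot)}$-boundedness of $T$, and $\|\chi_{B}\|_{L^{p(\cdot)}(\rn)}\|\chi_{B}\|_{L^{p'(\cdot)}(\rn)}\sim|B|$, which again removes $|B|^{-\beta/n}$. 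For the two terms containing $f^{\infty}$ the size condition \eqref{size-1} applies (because $B\cap(2B)^{c}=\emptyset$); bounding $|b(z)-b_{B}|\ls\|b\|_{\dot{\Lambda}_{\beta}}|z-x_{0}|^{\beta}$ for $z\in(2B)^{c}$ and using $|y-z|\sim|x_{0}-z|$ for $y\in B$, these become tail integrals $\ls\|b\|_{\dot{\Lambda}_{\beta}}\int_{(2B)^{c}}|f(z)||x_{0}-z|^{-n}\,dz$ and $\ls\|b\|_{\dot{\Lambda}_{\beta}}|B|^{-\beta/n}\int_{(2B)^{c}}|f(z)||z-x_{0}|^{\beta}|x_{0}-z|^{-n}\,dz$.

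Finally I would feed these four bounds into $\big\|M^{\sharp}_{\beta}([b,T]f)\big\|_{\dot{K}^{\alpha,q}_{p(\cdot)}(\rn)}=\big\|\{2^{\alpha k}\|M^{\sharp}_{\beta}([b,T]f)\chi_{k}\|_{L^{p(\cdot)}(\rn)}\}_{k}\big\|_{\ell^{q}}$ and, for each $k$, decompose the regions $2B$ and $(2B)^{c}$ above according to the annuli $E_{j}$. For $j$ within a bounded distance $N_{0}$ of $k$ the local part (the first and third terms) is absorbed by the $L^{p(\cdot)}$-boundedness of $T$ and of $M$ (here the hypotheses $T\colon L^{p(\cdot)}\to L^{p(\cdot)}$ and $p(\cdot)\in\mathscr{B}(\rn)$ are used), applied to the finitely many pieces $f\chi_{j}$, giving a bound $\ls\|b\|_{\dot{\Lambda}_{\beta}}\sum_{|j-k|\le N_{0}}\|f\chi_{j}\|_{L^{p(\cdot)}(\rn)}$ which is harmless after multiplication by $2^{\alpha k}$ and taking $\ell^{q}$. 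For $j$ far from $k$ the two global terms are estimated shell by shell; inserting H\"older's inequality in $L^{p(\cdot)}(\rn)$ together with \eqref{1.1} and $\|\chi_{B}\|_{L^{p(\cdot)}(\rn)}\|\chi_{B}\|_{L^{p'(\cdot)}(\rn)}\sim|B|$ produces geometric factors roughly $2^{-(k-j)n\delta_{2}}$ for $j<k$ and $2^{-(j-k)n\delta_{1}}$ for $j>k$ (the power $|z-x_{0}|^{\beta}$ in the second global term being compensated by the normalization $|B|^{-\beta/n}$), and the resulting two discrete convolutions are summed in $\ell^{q}$ by Young's inequality, the hypothesis $-n\delta_{1}<\alpha<n\delta_{2}$ being exactly what makes $\sum_{m>0}2^{(\alpha-n\delta_{2})m}$ and $\sum_{m>0}2^{-(\alpha+n\delta_{1})m}$ converge. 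Collecting the near and far parts yields $\big\|M^{\sharp}_{\beta}([b,T]f)\big\|_{\dot{K}^{\alpha,q}_{p(\cdot)}(\rn)}\ls\|b\|_{\dot{\Lambda}_{\beta}}\|f\|_{\dot{K}^{\alpha,q}_{p(\cdot)}(\rn)}$, hence (i); (ii) follows verbatim.

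The step I expect to be the main obstacle is the control of the two global (tail) terms. There one has only the bare size condition \eqref{size-1} at one's disposal---no regularity of the kernel of $T$---while the Lipschitz hypothesis forces the growth factor $|z-x_{0}|^{\beta}$ into the tail integral; making that contribution genuinely summable requires keeping the full annular Herz structure and carefully balancing the shell decay, the weight $2^{\alpha j}$ built into the source norm, and the ratios of $\|\chi_{B_{j}}\|_{L^{p(\cdot)}(\rn)}$ governed by \eqref{1.1}, so that the geometric series in $|j-k|$ close up precisely under $-n\delta_{1}<\alpha<n\delta_{2}$ and the $\beta$-dependent factors merely rescale one annulus at a time. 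A secondary point requiring care is to justify the pointwise commutator inequality and the use of the $L^{p(\cdot)}$-estimate when $T$ is merely sublinear rather than linear, and to align all of this with the oscillation characterization of $\dot{K}^{\alpha,q}_{p(\cdot)}\dot{F}^{\beta}_{\infty}(\rn)$; the bookkeeping in the non-homogeneous case is then routine.
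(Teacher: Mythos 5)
Your overall architecture agrees with the paper's: both reduce matters, via the oscillation characterization of $\dot{K}^{\alpha,q}_{p(\cdot)}\dot{F}^{\beta}_{\infty}(\mathbb{R}^{n})$ (the $r=\infty$ case of Theorem \ref{the-KF}), to a sharp-function estimate for $[b,T]f$ obtained by splitting $f$ near/far from the cube and subtracting a constant. But the execution of your tail estimates has two gaps that I do not think can be repaired as written. First, you split $(b(y)-b_{B})Tf(y)$ into $(b(y)-b_{B})Tf^{0}(y)+(b(y)-b_{B})Tf^{\infty}(y)$ and propose to control the second piece by the size condition, arriving at $\|b\|_{\dot{\Lambda}_{\beta}}\int_{(2B)^{c}}|f(z)|\,|x_{0}-z|^{-n}dz$. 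This majorant is not controlled by any maximal function: decomposing $(2B)^{c}$ into dyadic shells $2^{i+1}B\setminus 2^{i}B$, each shell contributes $\gtrsim$ an average of $|f|$, so the bound diverges logarithmically as $r\to 0$ (try $f=\chi_{B(0,1)}$, $B=B(0,r)$). The divergence is in the scale $r$ of the oscillation ball, not in the Herz index $|j-k|$, so no choice of $\delta_{1},\delta_{2},\alpha$ and no Young-type summation over annuli rescues it. The term $(b(y)-b_{B})Tf(y)$ must be kept whole: its $B$-average is $\lesssim\|b\|_{\dot{\Lambda}_{\beta}}|B|^{\beta/n}M(Tf)(x)$, and one then needs the boundedness of $M$ \emph{and of the sublinear operator $T$ itself} on $\dot{K}^{\alpha,q}_{p(\cdot)}(\mathbb{R}^{n})$ (Izuki's theorem, Lemma \ref{L4}); that is the step where the annular decomposition, \eqref{1.1} and the hypothesis $-n\delta_{1}<\alpha<n\delta_{2}$ are actually consumed, and it is missing from your outline. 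Second, for the last term your stated bound $\|b\|_{\dot{\Lambda}_{\beta}}|B|^{-\beta/n}\int_{(2B)^{c}}|f(z)|\,|z-x_{0}|^{\beta}|x_{0}-z|^{-n}dz$ discards the constant $c_{B}'$ before it has done any work, and the claim that $|z-x_{0}|^{\beta}$ is ``compensated by $|B|^{-\beta/n}$'' is only true on the innermost shell: on the $i$-th shell the uncompensated factor is $2^{i\beta}$, and $\sum_{i}2^{i\beta}Mf(x)$ diverges. The cancellation from $c_{B}'$ is indispensable; it produces the kernel difference $|y-z|^{-n}-|x_{Q}-z|^{-n}\lesssim |y-x_{Q}|^{n}|x_{Q}-z|^{-2n}$, which converts $2^{i\beta}$ into a summable $2^{i(\beta-1)n}$-type factor (this is exactly the paper's estimate of $\mathrm{F}_{22}$).

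Two further points. Your plan to sum the tail contributions ``shell by shell'' over the origin-centered annuli $E_{j}$ conflates two unrelated geometries: the oscillation ball $B=B(x_{0},r)$ has arbitrary center and radius, and for small $r$ its complement $(2B)^{c}$ meets every $E_{j}$, including $E_{k}\ni x_{0}$ itself, so there is no near/far dichotomy in $|j-k|$ for these integrals. The correct order of operations is: first a pointwise bound, uniform over all $B\ni x$, of the form $M^{\sharp}_{\beta}([b,T]f)(x)\lesssim\|b\|_{\dot{\Lambda}_{\beta}}\bigl(Mf(x)+(M(|f|^{t}))^{1/t}(x)+M(Tf)(x)\bigr)$ with $1<t<p^{-}$, and only afterwards the Herz-norm estimates for $M$ and $T$. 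Finally, the identity $[b,T]f=(b-b_{B})Tf-T((b-b_{B})f)$ you start from holds for linear $T$ only; since $T$ is merely sublinear (satisfying \eqref{sub-def}), one needs something like the paper's two-case argument comparing $b_{3Q}Tf-T(bf)$ with the majorant constant $C_{b,f,Q}$ from above and from below. You flag this as a secondary point, but without it the four-term decomposition is not justified.
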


\begin{thm}\label{H2}
Let $\alpha\in \mathbb{R}$, $0<\lambda<n$, $0<q_{1},q_{2}<\infty$, $p_{1}(\cdot),p_{2}(\cdot)\in\mathscr{B}(\mathbb{R}^{n})$ with $1/p_{2}(\cdot)=1/p_{1}(\cdot)-\lambda/n$.
Assume that $\delta_{1}, \delta_{2}\in(0,1)$ satisfy (\ref{1.1}) and $-n\delta_{1}<\alpha<n\delta_{2}$.
If a sublinear operator $T_{\lambda}$ satisfies the condition
\begin{align}\label{size-2}
|T_{\lambda}(f)(x)|\leq C\int_{\mathbb{R}^{n}}\frac{|f(y)|}{|x-y|^{n-\lambda}}dy,~x\not\in \supp f,
\end{align}
for any integrable function $f$ with compact support and $T_{\lambda}$ is a bounded operator from $L^{p_{1}(\cdot)}(\mathbb{R}^{n})$ to $L^{p_{2}(\cdot)}(\mathbb{R}^{n})$,
then for $b\in \dot{\Lambda}_{\beta}$ with $0<\beta<1$,
\begin{enumerate}
\item[\rm (i)]
$[b,T_{\lambda}]$ is a bounded from $\dot{K}^{\alpha, q_{1}}_{p_{1}(\cdot)}(\mathbb{R}^{n})$ to $\dot{K}^{\alpha,q_{2}}_{p_{2}(\cdot)}\dot{F}^{\beta}_{\infty}(\mathbb{R}^{n})$;

\item[\rm (ii)]
$[b,T_{\lambda}]$ is a bounded from $K^{\alpha, q_{1}}_{p_{1}(\cdot)}(\mathbb{R}^{n})$ to $K^{\alpha,q_{2}}_{p_{2}(\cdot)}\dot{F}^{\beta}_{\infty}(\mathbb{R}^{n})$.
\end{enumerate}
\end{thm}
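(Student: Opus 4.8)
\noindent\emph{Proof strategy for Theorem \ref{H2}.}
The plan is to mimic the argument for Theorem \ref{H1}, with the Calder\'on--Zygmund data replaced by (\ref{size-2}) and with the boundedness of $T_{\lambda}$ from $L^{p_{1}(\cdot)}(\mathbb{R}^{n})$ to $L^{p_{2}(\cdot)}(\mathbb{R}^{n})$. We only treat the homogeneous part (i); part (ii) is proved by the same computations with the annuli $\{E_{k}\}_{k\in\mathbb{Z}}$ replaced by $\{\tilde{\chi}_{m}\}_{m\in\mathbb{N}_{0}}$. The starting point is the characterization of the inner quasi-norm referred to in the abstract, which for $0<\beta<1$ and $r=\infty$ can be realized through the fractional sharp maximal function (the Littlewood--Paley family $\{\varphi_{j}^{\vee}\ast\}$ together with the family of balls): for every $g$,
\[
\|g\|_{\dot{K}^{\alpha,q_{2}}_{p_{2}(\cdot)}\dot{F}^{\beta}_{\infty}}\sim\big\|g^{\#}_{\beta}\big\|_{\dot{K}^{\alpha,q_{2}}_{p_{2}(\cdot)}},\qquad g^{\#}_{\beta}(x):=\sup_{B\ni x}\frac{1}{|B|^{1+\beta/n}}\int_{B}|g(y)-g_{B}|\,dy,
\]
so that it suffices to show $\big\|([b,T_{\lambda}]f)^{\#}_{\beta}\big\|_{\dot{K}^{\alpha,q_{2}}_{p_{2}(\cdot)}}\lesssim\|b\|_{\dot{\Lambda}_{\beta}}\|f\|_{\dot{K}^{\alpha,q_{1}}_{p_{1}(\cdot)}}$.

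The heart of the matter is the pointwise estimate
\[
([b,T_{\lambda}]f)^{\#}_{\beta}(x)\lesssim\|b\|_{\dot{\Lambda}_{\beta}}\Big(M(T_{\lambda}f)(x)+\sup_{B\ni x}\sum_{i\ge0}2^{-i(1-\beta)}\frac{\|f\chi_{2^{i}B}\|_{L^{p_{1}(\cdot)}}}{\|\chi_{2^{i}B}\|_{L^{p_{2}(\cdot)}}}\Big).
\]
To get it, fix $x$ and a ball $B=B(x_{B},r_{B})\ni x$, let $b_{B}$ be the mean of $b$ on $B$, and split
\[
[b,T_{\lambda}]f=(b-b_{B})T_{\lambda}f-T_{\lambda}\big((b-b_{B})f\chi_{2B}\big)-T_{\lambda}\big((b-b_{B})f\chi_{(2B)^{c}}\big),
\]
subtracting from the sharp-function average the constant $c_{B}:=-T_{\lambda}((b-b_{B})f\chi_{(2B)^{c}})(x_{B})$, which is finite by (\ref{size-2}). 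Since $|b(y)-b_{B}|\lesssim\|b\|_{\dot{\Lambda}_{\beta}}r_{B}^{\beta}\sim\|b\|_{\dot{\Lambda}_{\beta}}|B|^{\beta/n}$ on $B$, the first term gives the $M(T_{\lambda}f)$ contribution. For the second (local) term one applies the generalized H\"older inequality in $L^{p_{2}(\cdot)}$, the $L^{p_{1}(\cdot)}\!\to\!L^{p_{2}(\cdot)}$ boundedness of $T_{\lambda}$, the bound $|b-b_{B}|\lesssim\|b\|_{\dot{\Lambda}_{\beta}}r_{B}^{\beta}$ on $2B$, and $\|\chi_{B}\|_{L^{p_{2}'(\cdot)}}\|\chi_{2B}\|_{L^{p_{2}(\cdot)}}\sim|B|$, obtaining $\lesssim\|b\|_{\dot{\Lambda}_{\beta}}\|f\chi_{2B}\|_{L^{p_{1}(\cdot)}}/\|\chi_{2B}\|_{L^{p_{2}(\cdot)}}$. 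For the third (tail) term one compares $T_{\lambda}((b-b_{B})f\chi_{(2B)^{c}})(y)$ with its value at $x_{B}$; using the cancellation $\big||y-z|^{\lambda-n}-|x_{B}-z|^{\lambda-n}\big|\lesssim r_{B}|x_{B}-z|^{\lambda-n-1}$ on $(2B)^{c}$, breaking $(2B)^{c}$ into the dyadic annuli $2^{i+1}B\setminus 2^{i}B$ with $|b(z)-b_{B}|\lesssim\|b\|_{\dot{\Lambda}_{\beta}}(2^{i}r_{B})^{\beta}$ there, and applying H\"older on each annulus together with the characteristic-function measure identities and the relation $\|\chi_{2^{i}B}\|_{L^{p_{1}(\cdot)}}\sim|2^{i}B|^{\lambda/n}\|\chi_{2^{i}B}\|_{L^{p_{2}(\cdot)}}$ (a consequence of $1/p_{2}(\cdot)=1/p_{1}(\cdot)-\lambda/n$), one gets the stated series; the summable factor $2^{-i(1-\beta)}$ (here $\beta<1$ is used) is exactly what the cancellation produces against the Lipschitz growth $(2^{i}r_{B})^{\beta}$.

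It then remains to take the $\dot{K}^{\alpha,q_{2}}_{p_{2}(\cdot)}$-quasi-norm of the right-hand side above. The term $M(T_{\lambda}f)$ is controlled by the boundedness of $M$ on $\dot{K}^{\alpha,q_{2}}_{p_{2}(\cdot)}(\mathbb{R}^{n})$ (valid since $p_{2}(\cdot)\in\mathscr{B}(\mathbb{R}^{n})$ and $-n\delta_{1}<\alpha<n\delta_{2}$) composed with the boundedness of $T_{\lambda}$ from $\dot{K}^{\alpha,q_{1}}_{p_{1}(\cdot)}(\mathbb{R}^{n})$ to $\dot{K}^{\alpha,q_{2}}_{p_{2}(\cdot)}(\mathbb{R}^{n})$, which follows from (\ref{size-2}), the $L^{p_{1}(\cdot)}\!\to\!L^{p_{2}(\cdot)}$ boundedness and (\ref{1.1}) via the usual splitting $f=\sum_{l}f\chi_{l}$. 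The supremum--sum term is estimated by the same three-region decomposition: for each $k$ one writes $f=\sum_{l}f\chi_{l}$ and, for balls $B$ meeting $E_{k}$, dominates $\|f\chi_{2^{i}B}\|_{L^{p_{1}(\cdot)}}$ by a sum of $\|f\chi_{l}\|_{L^{p_{1}(\cdot)}}$ over $l$ comparable to $k+i$, then uses (\ref{1.1}) for the relevant ratios of characteristic-function quasi-norms, with the exponent $\delta_{1}$ on the inner range $l\le k$ and $\delta_{2}$ on the outer range $l>k$; the two-sided restriction $-n\delta_{1}<\alpha<n\delta_{2}$ makes the resulting series in $l$ geometrically convergent, and then summing in $i$ (using $\beta<1$), taking $\ell^{q_{2}}$-quasi-norms in $k$, and applying H\"older's and Young's inequalities for series (here $q_{1},q_{2}<\infty$ enters) gives the bound by $\|f\|_{\dot{K}^{\alpha,q_{1}}_{p_{1}(\cdot)}}$. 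Combining the three contributions completes (i), and (ii) is obtained verbatim.

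The main obstacle is the tail estimate in the sharp-function bound: one must produce a pointwise bound whose dyadic coefficients \emph{decay}, which is why the kernel cancellation (rather than the bare size condition (\ref{size-2})) has to be used --- it is precisely what turns the Lipschitz growth $(2^{i}r_{B})^{\beta}$ into a convergent series --- and which requires a careful match of the two variable exponents through $1/p_{2}(\cdot)=1/p_{1}(\cdot)-\lambda/n$ and the identities $\|\chi_{B}\|_{L^{p(\cdot)}}\|\chi_{B}\|_{L^{p'(\cdot)}}\sim|B|$; once the pointwise bound is in hand, passing to the Herz quasi-norm is the routine three-region argument, which converges precisely because $-n\delta_{1}<\alpha<n\delta_{2}$.
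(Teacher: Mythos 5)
Your outline shares the paper's skeleton (the sharp-maximal characterization of $\dot{K}^{\alpha,q_2}_{p_2(\cdot)}\dot{F}^{\beta}_{\infty}$, a pointwise oscillation estimate on each cube, then Herz-norm estimates via (\ref{1.1}) and the $\dot{K}^{\alpha,q_1}_{p_1(\cdot)}\to\dot{K}^{\alpha,q_2}_{p_2(\cdot)}$ boundedness of $T_{\lambda}$, which is the paper's Theorem \ref{T-HH}), but two steps in your pointwise estimate are not justified under the stated hypotheses. First, the decomposition $[b,T_{\lambda}]f=(b-b_{B})T_{\lambda}f-T_{\lambda}((b-b_{B})f\chi_{2B})-T_{\lambda}((b-b_{B})f\chi_{(2B)^{c}})$ is an identity only when $T_{\lambda}$ is linear. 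The theorem assumes only sublinearity in the sense of (\ref{sub-def}), under which $T_{\lambda}(bf)$ cannot be split off from $b_{B}T_{\lambda}f$: the inequalities go one way only. The paper's proof spends most of its effort on exactly this point; it subtracts the explicit constant $C_{b,g,Q}(x_{Q})$ and runs a two-case analysis on the sign of $b_{3Q}T_{\lambda}g(y)-T_{\lambda}(bg)(y)-C_{b,g,Q}$, using the one-sided inequalities of (\ref{sub-def}) to trap that quantity by $T_{\lambda}((b_{3Q}-b)g_{1})(y)$ in one case and by explicit majorant integrals in the other. Without some such device your argument covers only linear $T_{\lambda}$, which excludes for instance the fractional maximal operator $M_{\lambda}$ treated in the paper's applications.

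Second, your tail estimate compares $T_{\lambda}((b-b_{B})f\chi_{(2B)^{c}})(y)$ with its value at $x_{B}$ via the cancellation $\bigl||y-z|^{\lambda-n}-|x_{B}-z|^{\lambda-n}\bigr|\lesssim r_{B}|x_{B}-z|^{\lambda-n-1}$. But (\ref{size-2}) is only a one-sided size bound on $|T_{\lambda}h(x)|$; it provides no integral-kernel representation of $T_{\lambda}$, let alone kernel regularity, so the difference of the operator's values at two points cannot be estimated this way (again $M_{\lambda}$ is a counterexample: it has no kernel, and the bare size bound yields a divergent series $\sum_{i}(2^{i}r_{B})^{\beta}$ rather than your $\sum_{i}2^{-i(1-\beta)}$). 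The paper sidesteps this by choosing the subtracted constant to be the explicit integral $3\int_{(3Q)^{c}}|b(z)-b_{3Q}||g(z)||x_{Q}-z|^{\lambda-n}dz$, so that the only differencing ever performed is between the explicit functions $|y-z|^{\lambda-n}$ and $|x_{Q}-z|^{\lambda-n}$, for which the smoothness estimate is legitimate; this is what produces the convergent geometric factor in the $F_{22}$ and $J_{22}$ estimates. Your remaining steps (the local term via H\"older and the $L^{p_1(\cdot)}\to L^{p_2(\cdot)}$ boundedness, and the three-region Herz-norm argument) are in line with the paper's Theorem \ref{T-HH} and Lemma \ref{L5}, but the two gaps above must be repaired before the proof applies to the class of operators in the statement.
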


This paper is organized as follows.
In Section 2, by the properties of Peetre maximal operator on the homogeneous spaces $\dot{K}^{\alpha,q}_{p(\cdot)}\dot{F}^{\beta}_{r}(\mathbb{R}^{n})$ and Hardy-Littlewood maximal operator on the homogeneous spaces $\dot{K}^{\alpha,q}_{p(\cdot)}(\mathbb{R}^{n})$ (see Lemma \ref{L1} and Lemma \ref{lem-M-K}),
we characterize the homogeneous spaces $\dot{K}^{\alpha,q}_{p(\cdot)}\dot{F}^{\beta}_{r}(\mathbb{R}^{n})$ via two families of operators in Theorem \ref{the-KF}.
Using a characterization of the homogeneous spaces $\dot{K}^{\alpha,q}_{p(\cdot)}\dot{F}^{\beta}_{\infty}(\mathbb{R}^{n})$, we give the proofs of Theorem \ref{H1} and Theorem \ref{H2} in Section 3 and Section 4, respectively.
Finally, as an application, some classical sublinear operators suitable for Theorem \ref{H1} and Theorem \ref{H2} are introduced in Section 5.

Throughout this paper, we will adopt the folowing notations.
Let $\mathbb{N}=\{1,2,\dots\}$ and $\mathbb{Z}=\{\dots, -1, 0, 1, \dots\}$.
For set $E$, let $|E|$ be the Lebesgue measure and $\chi_E$ be the characteristic function.
We denote by $C$  {positive constants}, which
are independent of the main parameters, but they may vary from line to
line. If $f\le Cg$, we then write $f\ls g$ and $g\gs f$;
and if $f \ls g\ls f$, we  write $f\sim g$.

\section{Characterizations of the homogeneous spaces $\dot{K}^{\alpha,q}_{p(\cdot)}\dot{F}^{\beta}_{r}(\mathbb{R}^{n})$}
\quad
It is worth that Wei and Zhang \cite[p.6]{WZ} characterized the non-homogeneous spaces $K^{\alpha,q}_{p(\cdot)}\dot{F}^{\beta}_{r}(\mathbb{R}^{n})$,
which implies the following fact:
\begin{equation}\label{eq-H3}
\|f\|_{K^{\alpha,q}_{p(\cdot)}\dot{F}^{\beta}_{\infty}(\mathbb{R}^{n})}
\sim\left\|\sup_{Q}\frac{1}{|Q|^{1+\beta/n}}\int_{Q}\mid f-f_{Q}\mid\right\|_{K^{\alpha,q}_{p(\cdot)}(\mathbb{R}^{n})},
\end{equation}
where $\alpha\in \mathbb{R}$, $0<\beta<1$, $0<q\leq\infty$ and $p(\cdot)\in\mathscr{B}(\mathbb{R}^{n})$.
Correspondingly, we want to show the characterizations of the homogeneous spaces $\dot{K}^{\alpha,q}_{p(\cdot)}\dot{F}^{\beta}_{r}(\mathbb{R}^{n})$ in this section. To this ends,  we first recall some definitions and lemmas.

Let $\varepsilon>0$, integer $S\geq-1$ and $\Psi_{0},\Psi\in\mathscr{S}(\mathbb{R}^{n})$ satisfying
\begin{equation}\label{2.1}
|\hat{\Psi}_{0}(\xi)|>0~~~on~\{|\xi|<2\varepsilon\},
\end{equation}
\begin{equation}\label{2.2}
|\hat{\Psi}(\xi)|>0~~~on~\{\frac{\varepsilon}{2}<|\xi|<2\varepsilon\},
\end{equation}
\begin{equation}\label{2.3}
D^{\tau}\hat{\Psi}(0)=0,~~\forall|\tau|\leq S.
\end{equation}
Here,\eqref{2.1} and \eqref{2.2} are Tauberian conditions, \eqref{2.3} expresses moment condition of $\Psi$.

\begin{defn}
Let $a>0$, $f\in\mathscr{S'}(\mathbb{R}^{n})$ and $\{\Psi_{j}\}_{j\in\mathbb{Z}}\subset\mathscr{S}(\mathbb{R}^{n})$. The Peetre's maximal functions are defined by
\begin{equation*}
(\Psi^{\ast}_{j})_{a}(x)=\sup_{y\in\mathbb{R}^{n}}\frac{|\Psi_{j}\ast f(y)|}{(1+|2^{j}(x-y)|)^{a}}\quad \text{for all} \ x\in \mathbb{R}^{n}.
\end{equation*}
\end{defn}

\begin{lem}[\cite{SCJS}]\rm\label{L1}
Let $a\in \mathbb{R}$, $\beta<S+1$,~$0<r,q\leq\infty$, $p(\cdot)\in\mathscr{P}(\mathbb{R}^{n})$
and $p(\cdot)/p_{0}\in \mathscr{B}(\mathbb{R}^{n})$ with $p_{0}<\min(p_{-},1)$.
Suppose that  $\delta_{1},\delta_{2}\in(0, 1)$ satisfy (\ref{1.1}) for $p(\cdot)/p_{0}$ and $-n\delta_{1}<\alpha p_{0}<n\delta_{2}$.
Assume further that $\Phi_{0}$ and $\Phi$ belong to $\mathscr{S}(\mathbb{R}^{n})$ be given by \eqref{2.1} and \eqref{2.2}, repectively.
If $a>n/p_{0}$ and $p_{0}<\beta$, then
\begin{equation*}
\|f\|_{\dot{K}^{\alpha,q}_{p(\cdot)}\dot{F}^{\beta}_{r}(\mathbb{R}^{n})}
\sim\left\|\left(\sum^{\infty}_{k=-\infty}[2^{k\beta r}(\Phi^{\ast}_{k})_{a}f]^{r}\right)^{1/r}\right\|_{\dot{K}^{\alpha,q}_{p(\cdot)}(\mathbb{R}^{n})}
\sim\left\|\left(\sum^{\infty}_{k=-\infty}2^{k\beta r}|\Phi_{k}\ast f|^{r}\right)^{1/r}\right\|_{\dot{K}^{\alpha,q}_{p(\cdot)}(\mathbb{R}^{n})}.
\end{equation*}
\end{lem}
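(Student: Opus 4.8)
The plan is to derive both equivalences by transplanting the classical local-means / Peetre-maximal-function technique of Bui–Paluszyński–Taibleson and Rychkov to the variable-exponent Herz setting. Write $g_j:=\varphi_j^{\vee}*f$ for the Littlewood–Paley pieces attached to the fixed resolution of unity, so that by definition $\|f\|_{\dot K^{\alpha,q}_{p(\cdot)}\dot F^{\beta}_{r}}=\bigl\|(\sum_{j}2^{j\beta r}|g_j|^r)^{1/r}\bigr\|_{\dot K^{\alpha,q}_{p(\cdot)}}$. Since $|\Phi_k*f(x)|\le(\Phi_k^{*})_{a}f(x)$ for every $x$, the quantity built from $\{\Phi_k*f\}_k$ is pointwise, hence in $\dot K^{\alpha,q}_{p(\cdot)}$-norm, dominated by the one built from $\{(\Phi_k^{*})_{a}f\}_k$. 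Thus the whole lemma reduces to the two bounds
\begin{equation*}
\Bigl\|\Bigl(\sum_{k}2^{k\beta r}[(\Phi_k^{*})_{a}f]^r\Bigr)^{1/r}\Bigr\|_{\dot K^{\alpha,q}_{p(\cdot)}}\lesssim\|f\|_{\dot K^{\alpha,q}_{p(\cdot)}\dot F^{\beta}_{r}}\qquad\text{and}\qquad\|f\|_{\dot K^{\alpha,q}_{p(\cdot)}\dot F^{\beta}_{r}}\lesssim\Bigl\|\Bigl(\sum_{k}2^{k\beta r}|\Phi_k*f|^r\Bigr)^{1/r}\Bigr\|_{\dot K^{\alpha,q}_{p(\cdot)}},
\end{equation*}
since $\|f\|\lesssim\|\Phi*f\text{-version}\|\le\|(\Phi^{*})_{a}\text{-version}\|\lesssim\|f\|$ then closes the chain.

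Both bounds rest on a discrete Calderón-type reproducing formula. Using the Tauberian conditions \eqref{2.1}--\eqref{2.2} I would construct, following Rychkov, functions $\Psi_0,\Psi\in\mathscr S(\mathbb R^{n})$ with $\Psi$ having vanishing moments of order $S$ such that $\sum_{j\in\mathbb Z}\widehat{\Psi_j}\,\widehat{\Phi_j}\equiv1$ on $\mathbb R^{n}\setminus\{0\}$, whence $f=\sum_{j\in\mathbb Z}\Psi_j*\Phi_j*f$ with convergence in $\mathscr S'(\mathbb R^{n})/\mathscr P(\mathbb R^{n})$; I would also record the companion identity $f=\sum_{j}\Lambda_j*\varphi_j^{\vee}*f$ for the fixed resolution, where $\widehat{\Lambda}$ is supported in an annulus and $\Lambda$ has infinitely many vanishing moments. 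Feeding these into $\Phi_k*f$ and into $\varphi_j^{\vee}*f$ respectively, and estimating the kernels $\Phi_k*\Lambda_j$ and $\varphi_j^{\vee}*\Psi_k$ by the standard device — cancellation against the vanishing moments of the finer factor together with Schwartz decay — yields, for some $\varrho>0$ and all $k,x$,
\begin{equation*}
2^{k\beta}(\Phi_k^{*})_{a}f(x)\lesssim\sum_{j\in\mathbb Z}2^{-|k-j|\varrho}\,2^{j\beta}(\varphi_j^{\vee*})_{a}f(x),\qquad 2^{j\beta}|g_j(x)|\lesssim\sum_{k\in\mathbb Z}2^{-|j-k|\varrho}\,2^{k\beta}\bigl(M(|\Phi_k*f|^{p_0})(x)\bigr)^{1/p_0}.
\end{equation*}
In these estimates the ``coarse-into-fine'' interactions, where the only gain is the $2^{-|k-j|(S+1)}$ coming from the $S$ moments of $\Phi$ (resp. $\Psi$), are exactly where the hypothesis $\beta<S+1$ (and, for the second bound, $S$ large relative to $|\beta|$) is consumed so that the geometric series converge against the weight $2^{(k-j)\beta}$; the ``fine-into-coarse'' interactions gain an arbitrarily large power $2^{-|k-j|\varrho}$ from the Schwartz decay and infinitely many moments of $\Lambda$ and $\varphi^{\vee}$. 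Passing to $M$ in the second estimate, and in the first after invoking the Plancherel--Pólya--Nikolskii inequality $(\varphi_j^{\vee*})_{a}f(x)\lesssim(M(|g_j|^{p_0})(x))^{1/p_0}$ (legitimate because $a>n/p_0$ and $g_j$ is band-limited), uses for $0<p_0\le1$ the elementary bound $\bigl(2^{mn}(1+2^{m}|\cdot|)^{-L}*|h|\bigr)(x)\lesssim(M(|h|^{p_0})(x))^{1/p_0}$ valid for $L$ large.

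It then remains to take $\dot K^{\alpha,q}_{p(\cdot)}$-norms of the $\ell^{r}$-sums of these pointwise bounds. A discrete Young/Minkowski inequality in the summation index (using $\sup_k\sum_j2^{-|k-j|\varrho}<\infty$ and shifting) reduces the right-hand sides, up to a constant, to $\bigl(\sum_{k}[2^{k\beta}(M(|h_k|^{p_0})(x))^{1/p_0}]^{r}\bigr)^{1/r}$ with $h_k=g_k$, resp. $h_k=\Phi_k*f$. Finally I would invoke the Hardy--Littlewood maximal inequality on Herz spaces with variable exponent: by the homogeneity identity $\bigl\||F|^{1/p_0}\bigr\|_{\dot K^{\alpha,q}_{p(\cdot)}}=\|F\|_{\dot K^{\alpha p_0,\,q/p_0}_{p(\cdot)/p_0}}^{1/p_0}$ and the hypotheses $p(\cdot)/p_0\in\mathscr B(\mathbb R^{n})$, $-n\delta_1<\alpha p_0<n\delta_2$, $p_0<\min(r,1,p_-)$, the (vector-valued, or for $r=\infty$ merely scalar) maximal operator is bounded on $\dot K^{\alpha p_0,\,q/p_0}_{p(\cdot)/p_0}(\mathbb R^{n})$; removing the maximal functions therefore costs only a constant and leaves $\bigl\|(\sum_{k}2^{k\beta r}|h_k|^{r})^{1/r}\bigr\|_{\dot K^{\alpha,q}_{p(\cdot)}}$, which is $\|f\|_{\dot K^{\alpha,q}_{p(\cdot)}\dot F^{\beta}_{r}}$ in the first case and the $\{\Phi_k*f\}$-expression in the second.

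The main obstacle is the bookkeeping in the almost-orthogonality estimates: one must balance the moment gain $2^{-|k-j|(S+1)}$ against the scale factors $2^{(k-j)\beta}$ and against the loss $2^{(k-j)_+\,n/p_0}$ that appears whenever a band-limited piece is estimated through a maximal function at the coarser scale, and check that every resulting geometric series converges under precisely the stated assumptions ($\beta<S+1$, $a>n/p_0$, $p_0$ small). The second delicate point is the interface with the variable-exponent machinery — verifying that Izuki's estimate \eqref{1.1} for $p(\cdot)/p_0$ together with $-n\delta_1<\alpha p_0<n\delta_2$ is exactly the condition under which the (vector-valued) maximal operator is bounded on $\dot K^{\alpha p_0,\,q/p_0}_{p(\cdot)/p_0}$ — and, since the spaces here are homogeneous, one must work throughout modulo polynomials and justify convergence of the reproducing formula in $\mathscr S'/\mathscr P$ rather than in $\mathscr S'$.
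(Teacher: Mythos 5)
The paper does not prove this lemma; it is imported verbatim from the cited reference of Shi and Xu, whose argument is precisely the Rychkov-type local-means scheme you describe (Calder\'on reproducing formula from the Tauberian conditions, almost-orthogonality balanced against the moment order $S$, the Peetre maximal function controlled by $(M(|\cdot|^{p_0}))^{1/p_0}$, and the vector-valued maximal inequality on $\dot K^{\alpha p_0,\,q/p_0}_{p(\cdot)/p_0}$ via the homogeneity identity). Your outline is correct and follows essentially the same route, including the sensible reading of the hypothesis ``$p_0<\beta$'' as the requirement $p_0<\min(r,1,p_-)$ needed for the maximal-function step.
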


\begin{lem}[\cite{SCJS}]\rm\label{lem-M-K}
Let $p(\cdot)\in \mathscr{B}(\mathbb{R}^{n})$, $0<q<\infty$ and $1<r<\infty$.
Assume that  $\delta_{1},\delta_{2}\in(0, 1)$ satisfy (\ref{1.1}) and $-n\delta_{1}<\alpha<n\delta_{2}$.
Then there exists a constant C such that for all sequences $\{f_{j}\}^{\infty}_{j=-\infty}$ of locally  functions on $\mathbb{R}^{n}$,
\begin{equation*}
\left\|\left(\sum^{\infty}_{k=-\infty}|Mf_{k}|^{r}\right)^{1/r}\right\|_{\dot{K}^{\alpha,q}_{p(\cdot)}(\mathbb{R}^{n})}\leq C\left\|\left(\sum^{\infty}_{k=-\infty}|f_{k}|^{r}\right)^{1/r}\right\|_{\dot{K}^{\alpha,q}_{p(\cdot)}(\mathbb{R}^{n})}.
\end{equation*}
\end{lem}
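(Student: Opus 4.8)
The plan is to bound $\|G\|_{\dot{K}^{\alpha,q}_{p(\cdot)}(\mathbb{R}^{n})}$ with $G:=\bigl(\sum_{k}|Mf_{k}|^{r}\bigr)^{1/r}$ \emph{annulus by annulus}, reducing the ``local'' part of each annular contribution to the known vector-valued Fefferman--Stein maximal inequality on $L^{p(\cdot)}(\mathbb{R}^{n})$ — valid because $p(\cdot)\in\mathscr{B}(\mathbb{R}^{n})$ and $1<r<\infty$ — and the two ``tail'' parts to the pointwise size bound for $M$ together with the generalized Hölder inequality in variable Lebesgue spaces and the decay estimate (\ref{1.1}). Write $F:=\bigl(\sum_{k}|f_{k}|^{r}\bigr)^{1/r}$ and $a_{j}:=2^{\alpha j}\|F\chi_{j}\|_{L^{p(\cdot)}(\mathbb{R}^{n})}$; since $0<q<\infty$, the assertion is exactly $\bigl(\sum_{l}(2^{\alpha l}\|G\chi_{l}\|_{L^{p(\cdot)}(\mathbb{R}^{n})})^{q}\bigr)^{1/q}\lesssim\bigl(\sum_{j}a_{j}^{q}\bigr)^{1/q}$.

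First I would fix $l\in\mathbb{Z}$. Since $f_{k}=\sum_{j}f_{k}\chi_{j}$ and $M$ is sublinear, $Mf_{k}\le\sum_{j}M(f_{k}\chi_{j})$, so splitting the sum over $j\le l-2$, $|j-l|\le1$ and $j\ge l+2$ and using Minkowski's inequality in $\ell^{r}$ gives $G\chi_{l}\le G^{1}_{l}+G^{2}_{l}+G^{3}_{l}$, where $G^{m}_{l}:=\chi_{l}\bigl(\sum_{k}|\sum_{j\in J_{m}}M(f_{k}\chi_{j})|^{r}\bigr)^{1/r}$ with $J_{1}=\{j\le l-2\}$, $J_{2}=\{|j-l|\le1\}$, $J_{3}=\{j\ge l+2\}$. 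For the local term $G^{2}_{l}$ there are only three indices $j$, so Minkowski in $\ell^{r}$ and the vector-valued Fefferman--Stein inequality yield $\|G^{2}_{l}\|_{L^{p(\cdot)}}\lesssim\sum_{|i|\le1}\|F\chi_{l+i}\|_{L^{p(\cdot)}}$. For $G^{1}_{l}$: if $x\in E_{l}$ and $j\le l-2$ then any ball about $x$ meeting $\supp(f_{k}\chi_{j})\subset E_{j}$ has radius $\gtrsim2^{l}$, hence $M(f_{k}\chi_{j})(x)\lesssim2^{-ln}\int_{E_{j}}|f_{k}|$; applying Minkowski in $\ell^{r}$, then Minkowski's integral inequality to pull the $\ell^{r}$-norm under $\int_{E_{j}}$, and then generalized Hölder, one gets $\|G^{1}_{l}\|_{L^{p(\cdot)}}\lesssim2^{-ln}\|\chi_{l}\|_{L^{p(\cdot)}}\sum_{j\le l-2}\|\chi_{j}\|_{L^{p'(\cdot)}}\|F\chi_{j}\|_{L^{p(\cdot)}}$. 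Symmetrically, for $x\in E_{l}$ and $j\ge l+2$ one has $M(f_{k}\chi_{j})(x)\lesssim2^{-jn}\int_{E_{j}}|f_{k}|$, giving $\|G^{3}_{l}\|_{L^{p(\cdot)}}\lesssim\|\chi_{l}\|_{L^{p(\cdot)}}\sum_{j\ge l+2}2^{-jn}\|\chi_{j}\|_{L^{p'(\cdot)}}\|F\chi_{j}\|_{L^{p(\cdot)}}$.

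Next I would turn these tails into summable convolutions. Combining (\ref{1.1}) with $\chi_{E_{k}}\le\chi_{B_{k}}$ and the norm-conjugate estimate $\|\chi_{B}\|_{L^{p(\cdot)}}\|\chi_{B}\|_{L^{p'(\cdot)}}\sim|B|$ (used at $B_{l}$ when $j\le l$, and at $B_{j}$ when $j\ge l$) yields $2^{-ln}\|\chi_{l}\|_{L^{p(\cdot)}}\|\chi_{j}\|_{L^{p'(\cdot)}}\lesssim2^{(j-l)n\delta_{2}}$ for $j\le l$ and $2^{-jn}\|\chi_{l}\|_{L^{p(\cdot)}}\|\chi_{j}\|_{L^{p'(\cdot)}}\lesssim2^{(l-j)n\delta_{1}}$ for $j\ge l$. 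Hence $2^{\alpha l}\|G^{1}_{l}\|_{L^{p(\cdot)}}\lesssim\sum_{j\le l-2}2^{(\alpha-n\delta_{2})(l-j)}a_{j}$, $\ 2^{\alpha l}\|G^{3}_{l}\|_{L^{p(\cdot)}}\lesssim\sum_{j\ge l+2}2^{(\alpha+n\delta_{1})(l-j)}a_{j}$, and $2^{\alpha l}\|G^{2}_{l}\|_{L^{p(\cdot)}}\lesssim a_{l-1}+a_{l}+a_{l+1}$. Because $\alpha<n\delta_{2}$ and $\alpha>-n\delta_{1}$, the two kernels $2^{(\alpha-n\delta_{2})(l-j)}$ (on $l-j\ge2$) and $2^{(\alpha+n\delta_{1})(l-j)}$ (on $l-j\le-2$) decay geometrically, hence are absolutely summable in the variable $l-j$; taking $\ell^{q}(\mathbb{Z})$-norms in $l$ then finishes the estimate: for $q\ge1$ this is Young's inequality for convolution on $\ell^{q}(\mathbb{Z})$, while for $0<q<1$ one uses $\bigl(\sum_{j}b_{j}\bigr)^{q}\le\sum_{j}b_{j}^{q}$ together with Fubini to bound $\sum_{l}\bigl(\sum_{j}c_{l-j}a_{j}\bigr)^{q}$ by $\bigl(\sum_{m}c_{m}^{q}\bigr)\sum_{j}a_{j}^{q}$. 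In every case $\|(2^{\alpha l}\|G^{i}_{l}\|_{L^{p(\cdot)}})_{l}\|_{\ell^{q}(\mathbb{Z})}\lesssim\|(a_{j})_{j}\|_{\ell^{q}(\mathbb{Z})}=\|F\|_{\dot{K}^{\alpha,q}_{p(\cdot)}(\mathbb{R}^{n})}$ for $i=1,2,3$, and adding the three contributions gives the claim.

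The step I expect to be the genuine obstacle is the local piece $G^{2}_{l}$: it requires the full vector-valued (Fefferman--Stein) maximal inequality in $L^{p(\cdot)}(\mathbb{R}^{n})$, and this is precisely where the hypothesis $p(\cdot)\in\mathscr{B}(\mathbb{R}^{n})$ — equivalently, the boundedness of $M$ on both $L^{p(\cdot)}(\mathbb{R}^{n})$ and $L^{p'(\cdot)}(\mathbb{R}^{n})$ — is used; the tails, by contrast, rely only on the pointwise size of $M$, the generalized Hölder inequality, and the exponent decay (\ref{1.1}). A secondary (routine) technicality is making the $\ell^{r}$ vector-valued manipulations rigorous for the tail sums, but this is harmless since $1<r<\infty$, so that both Minkowski's inequality in $\ell^{r}$ and Minkowski's integral inequality apply.
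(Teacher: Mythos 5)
The paper does not prove this lemma at all --- it is quoted verbatim from the reference [SCJS] (Shi--Xu). Your reconstruction is correct and is essentially the standard argument used there: the three-fold annular decomposition, the size estimate $M(f_k\chi_j)(x)\lesssim 2^{-\max(j,l)n}\int_{E_j}|f_k|$ for $|j-l|\geq 2$, the norm-conjugate identity $\|\chi_{B}\|_{L^{p(\cdot)}}\|\chi_{B}\|_{L^{p'(\cdot)}}\sim|B|$ combined with (\ref{1.1}) to produce the geometric factors $2^{(\alpha-n\delta_2)(l-j)}$ and $2^{(\alpha+n\delta_1)(l-j)}$, and the vector-valued Fefferman--Stein inequality on $L^{p(\cdot)}(\mathbb{R}^n)$ for the diagonal terms. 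The only input you should cite rather than assert is that last inequality; it does hold under the bare hypothesis $p(\cdot)\in\mathscr{B}(\mathbb{R}^n)$ with $1<r<\infty$ (via the extrapolation theorem of Cruz-Uribe--Fiorenza--Martell--P\'erez together with Diening's duality/left-openness theorem for $\mathscr{B}(\mathbb{R}^n)$), so your proof is complete.
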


The following two families of operators are introduced by \cite{S}.

\begin{defn}
Let $Q_{z}(t)=Q(z,t)$ be a cube centered at $z\in\rn$, with side length $t$ and sides parallel to the axes.
For $m\in \mathbb{N}$, $\beta\in \mathbb{R}$ and $0<\gamma\leq\infty$.

(i)The family of operators $S^{\beta}_{r,\gamma,m}$ are defined by
\begin{equation*}
S^{\beta}_{r,\gamma,m}f(x)=\left(\int^{\infty}_{0}\left(\frac{1}{|Q_{0}(t)|}\int_{Q_{0}(t)}|\Delta^{m}_{h}f(x)|^{\gamma}dh\right)^{r/\gamma}\frac{dt}{t^{1+\beta r}}\right)^{1/r}\quad \text{for all} \ x\in\rn,
\end{equation*}
where $\Delta^{m}_{h}$~is the difference operator, that is,
\begin{equation*}
\Delta^{1}_{h}f(x)=\Delta_{h}f(x)=f(x+h)-f(x),
\end{equation*}
\begin{equation*}
\Delta^{m+1}_{h}f(x)=\Delta^{m}_{h}f(x+h)-\Delta^{m}_{h}f(x), \quad m\geq1.
\end{equation*}

(ii)For a fixed ball $Q=Q_{x}(t)$ with $x\in\rn$ and $t>0$, we define the oscillation
\begin{equation*}
osc^{m}_{\gamma}(f,Q)=osc^{m}_{\gamma}(f,x,t)=\inf_{P\in P^{m}}\left(\frac{1}{|Q|}\int_{Q}|f(y)-P(y)|^{\gamma}dy\right)^{1/\gamma},
\end{equation*}
where the infimum is taken over all polynomials of degree not exceeding $m$. Further, we define the family of operators
\begin{equation*}
\mathscr{D}^{\beta}_{r,\gamma,m}f(x)=\left(\int^{\infty}_{0}(osc^{m-1}_{\gamma}(f,x,t))^{r}\frac{dt}{t^{1+\beta r}}\right)^{1/r}.
\end{equation*}
For $q=\infty$ or $r=\infty$, we have the usual modifications and replace integrations by sup-norms.
\end{defn}

Using Lemma \ref{L1} and Lemma \ref{lem-M-K}, we repeat the process of \cite[pp.5-6]{WZ} and obtain the following theorem.
The details being omitted.
\begin{thm}\label{the-KF}
Let $a\in \mathbb{R}$, $0<r<\infty$ and $p(\cdot)\in\mathscr{B}(\mathbb{R}^{n})$.
Assume that $\delta_{1}, \delta_{2}\in(0,1)$ satisfy (\ref{1.1}) and $-n\delta_{1}<\alpha<n\delta_{2}$.
If $1<\gamma\leq\infty$, $m>\frac{\beta}{a_{0}}$,
\begin{equation*}
\beta>\sigma_{p,r,\gamma}=\max\left\{0,\nu\left(\frac{1}{p_{-}}-\frac{1}{\gamma}\right),\nu\left(\frac{1}{r}-\frac{1}{\gamma}\right)\right\}
\end{equation*}
and $\nu$ is the trace of a matrix $(see \cite[pp.\,391]{S})$, then
\begin{equation*}
\|f\|_{\dot{K}^{\alpha,q}_{p(\cdot)}\dot{F}^{\beta}_{r}(\mathbb{R}^{n})}\sim\|S^{\beta}_{r,\gamma,m}f\|_{\dot{K}^{\alpha,q}_{p(\cdot)}(\mathbb{R}^{n})}
\sim\|\mathscr{D}^{\beta}_{r,\gamma,m}f\|_{\dot{K}^{\alpha,q}_{p(\cdot)}(\mathbb{R}^{n})}.
\end{equation*}
\end{thm}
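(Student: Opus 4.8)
The strategy is to run Seeger's argument \cite{S} --- already transplanted to the non-homogeneous Herz variable-exponent setting by Wei and Zhang \cite[pp.\,5--6]{WZ} (cf. the non-homogeneous identity \eqref{eq-H3}) --- onto the homogeneous scale, using Lemma \ref{L1} to trade the Littlewood--Paley side for the Peetre-maximal side and Lemma \ref{lem-M-K} to absorb the vector-valued Hardy--Littlewood maximal terms that will appear. Fix $\Phi_{0},\Phi\in\mathscr{S}(\rn)$ obeying the Tauberian conditions \eqref{2.1}--\eqref{2.2} and the moment condition \eqref{2.3} with $S$ large enough that $S\geq m-1$ and $\beta<S+1$. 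Applying Lemma \ref{L1} (with the auxiliary exponent $p_{0}$ it provides, $p_{0}<\min(p_{-},1)$, $p_{0}<\beta$, and any $a>n/p_{0}$) we get
\begin{equation*}
\|f\|_{\dot{K}^{\alpha,q}_{p(\cdot)}\dot{F}^{\beta}_{r}(\rn)}
\sim\left\|\left(\sum_{k\in\zz}2^{k\beta r}|\Phi_{k}\ast f|^{r}\right)^{1/r}\right\|_{\dot{K}^{\alpha,q}_{p(\cdot)}(\rn)}
\sim\left\|\left(\sum_{k\in\zz}\big[2^{k\beta}(\Phi^{\ast}_{k})_{a}f\big]^{r}\right)^{1/r}\right\|_{\dot{K}^{\alpha,q}_{p(\cdot)}(\rn)},
\end{equation*}
so it is enough to prove the two upper estimates
\begin{equation*}
\|S^{\beta}_{r,\gamma,m}f\|_{\dot{K}^{\alpha,q}_{p(\cdot)}(\rn)}\ls\|f\|_{\dot{K}^{\alpha,q}_{p(\cdot)}\dot{F}^{\beta}_{r}(\rn)},\qquad
\|\mathscr{D}^{\beta}_{r,\gamma,m}f\|_{\dot{K}^{\alpha,q}_{p(\cdot)}(\rn)}\ls\|f\|_{\dot{K}^{\alpha,q}_{p(\cdot)}\dot{F}^{\beta}_{r}(\rn)}
\end{equation*}
together with the two matching lower estimates $\|f\|_{\dot{K}^{\alpha,q}_{p(\cdot)}\dot{F}^{\beta}_{r}(\rn)}\ls\|S^{\beta}_{r,\gamma,m}f\|_{\dot{K}^{\alpha,q}_{p(\cdot)}(\rn)}$ and $\|f\|_{\dot{K}^{\alpha,q}_{p(\cdot)}\dot{F}^{\beta}_{r}(\rn)}\ls\|\mathscr{D}^{\beta}_{r,\gamma,m}f\|_{\dot{K}^{\alpha,q}_{p(\cdot)}(\rn)}$; combining the four proves the theorem. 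Throughout, $0<\beta$ and $m>\beta/a_{0}$ make the difference order $m$ exceed the smoothness $\beta$, which is what makes the two discrete Hardy-type summations below converge, while $\beta>\sigma_{p,r,\gamma}$ is precisely the condition keeping the exponents that survive a $\gamma$-power rescaling inside the admissible range of Lemma \ref{lem-M-K} (if $r$ or $p_{-}$ is $\leq1$ one first raises the whole estimate to a small power, which the lower bound on $\beta$ permits).

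For the two upper estimates I would insert a Calder\'on reproducing formula $f=\sum_{j\in\zz}\Phi_{j}\ast\widetilde{\Phi}_{j}\ast f$, with $\widetilde{\Phi}\in\mathscr{S}(\rn)$ carrying many vanishing moments, and expand $\Delta^{m}_{h}(\Phi_{j}\ast\widetilde{\Phi}_{j}\ast f)(x)$. Splitting the sum over $j$ at the index where $2^{j}t\sim1$, Taylor's theorem and the moment conditions \eqref{2.3} gain a factor $(2^{j}t)^{m}$ on the low-frequency part ($2^{j}t\leq1$), while the size and localization of the kernels handle the high-frequency part ($2^{j}t>1$); after routine estimates (also using the band-limitedness of the dyadic pieces) this dominates the inner average $(\frac{1}{|Q_{0}(t)|}\int_{Q_{0}(t)}|\Delta^{m}_{h}f(x)|^{\gamma}dh)^{1/\gamma}$ defining $S^{\beta}_{r,\gamma,m}f(x)$ by $\ls\sum_{j}\min\{(2^{j}t)^{m},1\}\,M_{\gamma}(\Phi_{j}\ast\widetilde{\Phi}_{j}\ast f)(x)$, where $M_{\gamma}g:=(M(|g|^{\gamma}))^{1/\gamma}$. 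Discretizing the $t$-integral and applying the discrete Hardy inequalities separately in the two regimes $j\leq l$ and $j>l$ (legitimate because $0<\beta<m$) reduces matters to estimating $\|(\sum_{k}[2^{k\beta}M_{\gamma}(\Phi_{k}\ast\widetilde{\Phi}_{k}\ast f)]^{r})^{1/r}\|_{\dot{K}^{\alpha,q}_{p(\cdot)}(\rn)}$, which by Lemma \ref{lem-M-K} and the standard comparison of Peetre maximal functions is $\ls\|(\sum_{k}[2^{k\beta}(\Phi^{\ast}_{k})_{a}f]^{r})^{1/r}\|_{\dot{K}^{\alpha,q}_{p(\cdot)}(\rn)}\sim\|f\|_{\dot{K}^{\alpha,q}_{p(\cdot)}\dot{F}^{\beta}_{r}(\rn)}$. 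The argument for $\mathscr{D}^{\beta}_{r,\gamma,m}f$ is identical except that one subtracts from $f$ the Taylor polynomial (of degree $\leq m-1$) of the low-frequency truncation $\sum_{2^{j}t\leq1}\Phi_{j}\ast\widetilde{\Phi}_{j}\ast f$, which is an admissible competitor in the infimum defining $osc^{m-1}_{\gamma}(f,x,t)$.

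The two lower estimates are the main obstacle. Here one uses Seeger's representation of $\Phi_{k}\ast f$: because $\Phi$ obeys \eqref{2.1}--\eqref{2.3}, $\Phi_{k}\ast f(x)$ can be written as a convergent average of $m$-th differences of $f$ with step comparable to $2^{-k}$ against a fixed rapidly decaying kernel; controlling the tail of that kernel at the cost of a Hardy--Littlewood maximal operator, and --- in the $\mathscr{D}$-case --- subtracting polynomials of degree $\leq m-1$, which the $m$-th difference annihilates, one obtains, writing $I_{t}f(x):=(\frac{1}{|Q_{0}(t)|}\int_{Q_{0}(t)}|\Delta^{m}_{h}f(x)|^{\gamma}dh)^{1/\gamma}$ for the inner average of $S^{\beta}_{r,\gamma,m}f$, the pointwise bounds
\begin{equation*}
|\Phi_{k}\ast f(x)|\ls M_{\gamma}(I_{c2^{-k}}f)(x)\qquad\text{and, respectively,}\qquad|\Phi_{k}\ast f(x)|\ls M_{\gamma}\big(osc^{m-1}_{\gamma}(f,\cdot,c2^{-k})\big)(x).
\end{equation*}
Multiplying by $2^{k\beta}$ and summing the $r$-th powers over $k\in\zz$ dominates $(\sum_{k}2^{k\beta r}|\Phi_{k}\ast f(x)|^{r})^{1/r}$ by $(\sum_{k}[2^{k\beta}M_{\gamma}(I_{c2^{-k}}f)(x)]^{r})^{1/r}$, resp. by the analogue with $osc^{m-1}_{\gamma}$; since the dyadic blocks $t\in[c2^{-k},2c2^{-k}]$, $k\in\zz$, tile $(0,\infty)$ and both $t\mapsto I_{t}f(x)$ and $t\mapsto osc^{m-1}_{\gamma}(f,x,t)$ are quasi-increasing under bounded dilations, $(\sum_{k}[2^{k\beta}I_{c2^{-k}}f(x)]^{r})^{1/r}\ls S^{\beta}_{r,\gamma,m}f(x)$ and $(\sum_{k}[2^{k\beta}osc^{m-1}_{\gamma}(f,x,c2^{-k})]^{r})^{1/r}\ls\mathscr{D}^{\beta}_{r,\gamma,m}f(x)$ pointwise. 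Taking $\dot{K}^{\alpha,q}_{p(\cdot)}(\rn)$-norms, removing the $M_{\gamma}$ by Lemma \ref{lem-M-K} --- the step that genuinely uses $\beta>\sigma_{p,r,\gamma}$ --- and invoking the equivalence of Lemma \ref{L1} yields the two lower estimates and completes the proof. The two points requiring real care are the uniform-in-$(k,f)$ control of the Calder\'on reproducing formula and of the kernel in Seeger's representation, and the bookkeeping that converts the continuous $t$-integrals defining $S^{\beta}_{r,\gamma,m}$ and $\mathscr{D}^{\beta}_{r,\gamma,m}$ into $\ell^{r}$-sums compatible with the discrete Peetre characterization of Lemma \ref{L1}.
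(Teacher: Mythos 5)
Your proposal follows exactly the route the paper takes: the paper's entire ``proof'' of Theorem \ref{the-KF} is the single remark that one combines Lemma \ref{L1} and Lemma \ref{lem-M-K} and repeats the argument of Wei--Zhang \cite[pp.\,5--6]{WZ} (i.e.\ Seeger's scheme from \cite{S}), with all details omitted. Your sketch --- Peetre-maximal characterization via Lemma \ref{L1}, Calder\'on reproducing formula plus moment conditions for the upper bounds, Seeger's difference representation for the lower bounds, and Lemma \ref{lem-M-K} to absorb the maximal functions --- is a correct and considerably more explicit account of that same argument.
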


\begin{rem}
Obviously, if we take $r=\infty$ in Theorem \ref{the-KF}, then
\begin{equation}\label{eq-H3-dot}
\|f\|_{\dot{K}^{\alpha,q}_{p(\cdot)}\dot{F}^{\beta}_{\infty}(\mathbb{R}^{n})}
\sim\left\|\sup_{Q}\frac{1}{|Q|^{1+\beta/n}}\int_{Q}\mid f-f_{Q}\mid\right\|_{\dot{K}^{\alpha,q}_{p(\cdot)}(\mathbb{R}^{n})}.
\end{equation}
\end{rem}

\section{The proof of Theorem \ref{H1}}
\quad
This section is devote to the proof of Theorem \ref{H1}. Now, we recall two lemmas.
Note that the following lemma may be found in \cite[p14, p38]{RA} and \cite{JTW}.

\begin{lem}\label{L3}
Let $0<\beta<1$ and $1<q\leq \infty$. Then
\begin{equation*}
\|f\|_{\dot{\Lambda}_{\beta}}\sim\sup_{Q}\frac{1}{|Q|^{1+\beta/n}}\int_{Q}|f-f_{Q}|\sim\sup_{Q}\frac{1}{|Q|^{\beta/n}}(\frac{1}{|Q|}\int_{Q}|f-f_{Q}|^{q})^{1/q},
\end{equation*}
for $q=\infty$~the formula should be interpreted appropriately.
\end{lem}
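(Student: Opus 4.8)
The statement to prove is Lemma~\ref{L3}, the characterization of the homogeneous Lipschitz space $\dot{\Lambda}_\beta$ via mean oscillation over cubes (in both the $L^1$-averaged and $L^q$-averaged form). Here is my plan.

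\medskip

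The plan is to prove the chain of equivalences
$$\|f\|_{\dot{\Lambda}_{\beta}}\gtrsim\sup_{Q}\frac{1}{|Q|^{\beta/n}}\Big(\frac{1}{|Q|}\int_{Q}|f-f_{Q}|^{q}\Big)^{1/q}\gtrsim\sup_{Q}\frac{1}{|Q|^{1+\beta/n}}\int_{Q}|f-f_{Q}|\gtrsim\|f\|_{\dot{\Lambda}_{\beta}},$$
which by cyclic implication yields all three quantities are comparable. The middle inequality is immediate from Hölder's inequality (the $L^1$ average is dominated by the $L^q$ average on a probability space), so there is no work there. For the first inequality, fix a cube $Q$ with center $x_Q$ and side length $\ell(Q)$; for $y\in Q$ one has $|f(y)-f_Q|\le \frac{1}{|Q|}\int_Q|f(y)-f(z)|\,dz$, and since $|y-z|\le \sqrt{n}\,\ell(Q)$ for $y,z\in Q$, the Lipschitz estimate gives $|f(y)-f(z)|\le \|f\|_{\dot\Lambda_\beta}|y-z|^\beta\le \|f\|_{\dot\Lambda_\beta}(\sqrt n\,\ell(Q))^\beta\sim \|f\|_{\dot\Lambda_\beta}|Q|^{\beta/n}$. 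Raising to the $q$-th power and averaging over $y\in Q$ preserves this bound, giving the first inequality (with the obvious modification when $q=\infty$, where one takes the essential supremum instead of the average).

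\medskip

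The only genuinely substantive step is the last inequality: recovering the pointwise Lipschitz bound from control of mean oscillations. I would argue as follows. Let $A:=\sup_{Q}\frac{1}{|Q|^{1+\beta/n}}\int_{Q}|f-f_{Q}|$. First, for two cubes $Q\subset \widetilde Q$ with $\ell(\widetilde Q)\le 2\ell(Q)$ one estimates $|f_Q-f_{\widetilde Q}|\le \frac{1}{|Q|}\int_Q|f-f_{\widetilde Q}|\le \frac{|\widetilde Q|}{|Q|}\cdot\frac{1}{|\widetilde Q|}\int_{\widetilde Q}|f-f_{\widetilde Q}|\lesssim A\,|\widetilde Q|^{\beta/n}$. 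Now fix $x,h$ with $h\ne 0$ and let $k_0$ be the integer with $2^{-k_0-1}\le |h| < 2^{-k_0}$; choose for each $k\ge k_0$ a cube $Q_k$ containing both $x$ and $x+h$ with $\ell(Q_k)\sim 2^{-k}$, nested so that $Q_{k+1}\subset Q_k$ and $\ell(Q_k)\le 2\ell(Q_{k+1})$. Telescoping, $f_{Q_{k_0}}=\sum_{k\ge k_0}(f_{Q_k}-f_{Q_{k+1}})$ plus the limit, and by the Lebesgue differentiation theorem $f_{Q_k}(x)\to f(x)$ for a.e.\ $x$ (at Lebesgue points); hence $|f(x)-f_{Q_{k_0}}|\le \sum_{k\ge k_0}|f_{Q_k}-f_{Q_{k+1}}|\lesssim A\sum_{k\ge k_0}2^{-k\beta}\lesssim A\,2^{-k_0\beta}\sim A|h|^\beta$, using $\beta>0$ for convergence of the geometric series. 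The same bound holds for $|f(x+h)-f_{Q_{k_0}}|$, and the triangle inequality gives $|f(x+h)-f(x)|\lesssim A|h|^\beta$ at Lebesgue points, hence everywhere after choosing the continuous representative.

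\medskip

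The main obstacle is purely bookkeeping in that last step: one must carefully arrange a nested sequence of cubes all containing the two fixed points $x$ and $x+h$ with geometrically decreasing side lengths and bounded ratios between consecutive ones, so that both the telescoping sum converges and each term is controlled by $A$ times the appropriate power of the side length. Everything else (Hölder, the Lipschitz bound on a cube, Lebesgue differentiation) is routine. For the $q=\infty$ endpoint and the statement's caveat about interpreting the formula appropriately, one simply replaces averages by (essential) suprema throughout; the same three inequalities hold with the same proofs, since the Lipschitz bound $|f(y)-f(z)|\lesssim\|f\|_{\dot\Lambda_\beta}|Q|^{\beta/n}$ is already an $L^\infty$-type statement on $Q$. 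Since this lemma is classical — as the excerpt notes, it appears in \cite{RA} and \cite{JTW} — I would keep the write-up brief, spelling out only the telescoping argument and referring to the references for the rest.
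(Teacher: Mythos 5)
The paper does not prove Lemma \ref{L3} at all; it simply cites DeVore--Sharpley and Janson--Taibleson--Weiss, so there is no in-paper argument to compare against. Your proposal supplies the standard Campanato-type proof, and it is the right one: H\"older for the middle comparison, the pointwise Lipschitz bound for the upper estimate of the oscillation, and a telescoping/Lebesgue-differentiation argument to recover the H\"older seminorm from the mean oscillations. This is essentially the argument in the cited references, so as a self-contained justification of the lemma it is sound and appropriately brief.

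One slip in the telescoping step needs fixing: you ask for cubes $Q_k$ with $\ell(Q_k)\sim 2^{-k}$ that contain \emph{both} $x$ and $x+h$ for every $k\ge k_0$, which is impossible once $2^{-k}<|h|$ (and would force $f_{Q_k}$ to converge simultaneously to $f(x)$ and $f(x+h)$). Only the initial cube $Q_{k_0}$, of side length $\sim|h|$, should contain both points; from it you run two separate nested shrinking chains, one converging to $x$ and one to $x+h$, each with $\ell(Q_{k+1})\sim\tfrac12\ell(Q_k)$, and apply the telescoping bound $|f_{Q_{k+1}}-f_{Q_k}|\lesssim A\,|Q_k|^{\beta/n}$ along each chain separately before combining with the triangle inequality through $f_{Q_{k_0}}$. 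With that correction the argument is complete.
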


\begin{lem}[\cite{IM1}]\rm\label{L4}
Let $a\in \mathbb{R}$, $0<q<\infty$ and $p(\cdot)\in \mathscr{B}(\mathbb{R}^{n})$. Suppose that $\delta_{1}, \delta_{2}\in (0, 1)$ satisfy (\ref{1.1}) and $-n\delta_{1}<\alpha<n\delta_{2}$. Assume that $T$ is a sublinear operator satisfying the size condition \eqref{size-1}.
If $T$ is bounded operator on $L^{p(\cdot)}(\mathbb{R}^{n})$, then $T$ is bounded on $\dot{K}^{\alpha,q}_{p(\cdot)}(\mathbb{R}^{n})$ and $K^{\alpha,q}_{p(\cdot)}(\mathbb{R}^{n})$.
\end{lem}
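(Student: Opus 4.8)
The plan is to decompose $f$ dyadically along the annuli $E_j$ and to reduce the boundedness of $T$ to an almost-diagonal (Schur-type) estimate for a matrix acting on the sequence $\{2^{\alpha j}\|f\chi_j\|_{L^{p(\cdot)}(\rn)}\}_{j\in\zz}$. Writing $f=\sum_{j\in\zz}f_j$ with $f_j=f\chi_j$, the sublinearity \eqref{sub-def} yields the pointwise bound $|Tf|\le\sum_{j\in\zz}|Tf_j|$, so it suffices to control, for each $k$, the quantities $\|(Tf_j)\chi_k\|_{L^{p(\cdot)}(\rn)}$ and then to sum them against the weights $2^{\alpha k}$ in $\ell^q$. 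I split the sum over $j$ into the near regime $|j-k|\le 1$ and the two far regimes $j\le k-2$ and $j\ge k+2$.

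In the near regime I use only that $T$ is bounded on $L^{p(\cdot)}(\rn)$: one has $\|(Tf_j)\chi_k\|_{L^{p(\cdot)}(\rn)}\le\|Tf_j\|_{L^{p(\cdot)}(\rn)}\ls\|f\chi_j\|_{L^{p(\cdot)}(\rn)}$, which contributes a bounded matrix entry since $|k-j|\le 1$. In the far regimes I invoke the size condition \eqref{size-1}: if $x\in E_k$ and $y\in E_j$ with $j\le k-2$ then $|x-y|\sim 2^k$, while if $j\ge k+2$ then $|x-y|\sim 2^j$, so in either case \eqref{size-1} together with the generalized H\"older inequality for variable Lebesgue spaces gives, for $x\in E_k$,
\[
|Tf_j(x)|\ls 2^{-n\max(j,k)}\int_{E_j}|f(y)|\,dy\ls 2^{-n\max(j,k)}\,\|f\chi_j\|_{L^{p(\cdot)}(\rn)}\,\|\chi_j\|_{L^{p'(\cdot)}(\rn)}.
\]
Taking the $L^{p(\cdot)}$-norm over $E_k$ then produces the extra factor $\|\chi_k\|_{L^{p(\cdot)}(\rn)}$.

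The heart of the argument is to extract geometric decay in $|k-j|$ from these far estimates, and this is precisely where the hypotheses $-n\delta_1<\alpha<n\delta_2$ are used. I will combine the standard duality relation $\|\chi_{B}\|_{L^{p(\cdot)}(\rn)}\,\|\chi_{B}\|_{L^{p'(\cdot)}(\rn)}\sim|B|$ with \eqref{1.1}. For $j\le k-2$, using $\|\chi_j\|_{L^{p'(\cdot)}(\rn)}\ls 2^{jn}/\|\chi_{B_j}\|_{L^{p(\cdot)}(\rn)}$ and the $p'(\cdot)$-estimate in \eqref{1.1} for $B_j\subset B_k$ shows that the matrix entry is $\ls 2^{(k-j)(\alpha-n\delta_2)}$, which decays since $\alpha<n\delta_2$. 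For $j\ge k+2$, the $p(\cdot)$-estimate in \eqref{1.1} for $B_k\subset B_j$ gives the entry $\ls 2^{-(j-k)(\alpha+n\delta_1)}$, which decays since $\alpha>-n\delta_1$. Hence in all three regimes the entry is dominated by $c_{k-j}$ for a single sequence $\{c_l\}_{l\in\zz}$ tending to $0$ geometrically as $|l|\to\infty$, so $\{c_l\}\in\ell^s$ for every $s>0$.

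It then remains to pass from $2^{\alpha k}\|(Tf)\chi_k\|_{L^{p(\cdot)}(\rn)}\ls\sum_{j}c_{k-j}\,2^{\alpha j}\|f\chi_j\|_{L^{p(\cdot)}(\rn)}$ to the $\ell^q$ bound. For $1\le q<\infty$ this is Young's inequality for convolution of sequences with $\{c_l\}\in\ell^1$; for $0<q<1$ one instead uses the elementary inequality $(\sum_j a_j)^q\le\sum_j a_j^q$ together with $\{c_l\}\in\ell^q$, summing first in $k$ and then in $j$. Either way one obtains $\|Tf\|_{\dot{K}^{\alpha,q}_{p(\cdot)}(\rn)}\ls\|f\|_{\dot{K}^{\alpha,q}_{p(\cdot)}(\rn)}$. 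The non-homogeneous space $K^{\alpha,q}_{p(\cdot)}(\rn)$ is treated identically, with the sum over $j\in\zz$ replaced by the sum over $m\in\mathbb{N}_{0}$ and $\chi_0$ replaced by $\chi_{B_0}$; the far-below regime is merely truncated at $j=0$, which only helps. I expect the main obstacle to be the bookkeeping in the far-regime estimates of the third paragraph, namely pairing the two inequalities in \eqref{1.1} with the duality identity so that the exponents $\alpha-n\delta_2$ and $\alpha+n\delta_1$ emerge with the correct signs, rather than any conceptual difficulty.
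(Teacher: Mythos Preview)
Your proposal is correct and follows essentially the same approach as the standard proof (the paper cites this lemma from \cite{IM1} without reproducing the argument, but its proof of the fractional analogue Theorem~\ref{T-HH} proceeds exactly along the lines you describe: dyadic decomposition $f=\sum_j f\chi_j$, splitting into $j\le k-2$, $|j-k|\le 1$, $j\ge k+2$, using the $L^{p(\cdot)}$ bound on the diagonal block and the size condition plus generalized H\"older off-diagonal, and then extracting geometric decay from \eqref{1.1} together with the duality relation of Lemma~\ref{L2}). Your handling of the two far regimes and the final $\ell^q$ summation (Young for $q\ge 1$, subadditivity of $t\mapsto t^q$ for $0<q<1$) matches the paper's treatment as well.
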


\begin{proof}[\bf{Proof of Theorem \ref{H1}}]
Fix a cube $Q=Q(c_{Q},t)$ with $c_{Q}\in\rn$ and $t>0$. For $f\in L^{p(\cdot)}(\mathbb{R}^{n})$, define $f_{1}=f\chi_{Q(c_{Q},3t)}$ and $f_{2}=f-f_{1}$. Let
$3^{k}Q=Q(c_{Q}, 3^{k}t)$ and $z\in3^{k}Q\backslash(3^{k-1}Q)$ for $k\geq2$.
Then there exists $x_{Q}\in Q$ such that
\begin{align}\label{eq-xQ}
|x_{Q}-z|>t/2+3^{k-1}t>3\cdot3^{k-2}t=\inf_{y\in3Q}|y-z|\quad \text{for} \ k\geq2.
\end{align}

Define
$$C_{b, f, Q}(x_{Q}):=3\int_{\rn\backslash(3Q)}\frac{|b(z)-b_{3Q}||f(z)|}{|x_{Q}-z|^{n}}dz.$$
Then
\begin{align}\label{eq-f123}
&\frac{1}{|Q|^{1+\beta/n}}\int_{Q}|[b,T]f(y)-([b,T]f)_{Q}|dy\notag\\
&\leq\frac{2}{|Q|^{1+\beta/n}}\int_{Q}|[b,T]f(y)-C_{b, f, Q}(x_{Q})|dy\notag\\
&=\frac{2}{|Q|^{1+\beta/n}}\int_{Q}|b(y)Tf(y)-b_{3Q}Tf(y)
+b_{3Q}Tf(y)-T(bf)(y)-C_{b, f, Q}(x_{Q})|dy\notag\\
&\lesssim\frac{1}{|Q|^{1+\beta/n}}\left(\int_{3Q}|(b(y)-b_{3Q})Tf(y)|dy
+\int_{3Q}|b_{3Q}Tf(y)-T(bf)(y)-C_{b, f, Q}(x_{Q})|dy\right)\notag\\
&=:\frac{1}{|Q|^{1+\beta/n}}(\rm{F_{1}+F_{2}}).
\end{align}

Firstly, we consider $\rm{F_{1}}$. From Lemma \ref{L3}, it follows that
\begin{equation*}
\begin{aligned}
{\rm{F_{1}}}
\lesssim\sup_{y\in 3Q}|b(y)-b_{3Q}|\int_{3Q}|Tf(y)|dy
\lesssim\|b\|_{\dot{\Lambda}_{\beta}}|3Q|^{1+\beta/n}M(Tf)(x)
\quad \text{for all} \ x\in3Q.
\end{aligned}
\end{equation*}

To estimate $\rm{F_{2}}$,  we consider the following two cases: $b_{3Q}Tf(y)-T(bf)(y)-C_{b, f, Q}\geq0$ and $b_{3Q}Tf(y)-T(bf)(y)-C_{b, f, Q}<0$ for $y\in3Q$, where
$$C_{b, f, Q}:=\sup_{y\in3Q}\int_{\rn\backslash(3Q)}\frac{|b(z)-b_{3Q}||f(z)|}{|y-z|^{n}}dz.$$

{\it Case 1:\, Estimation of $\rm{F_{2}}$ in the case $b_{3Q}Tf(y)-T(bf)(y)-C_{b, f, Q}\geq0$ with $y\in3Q$.}  By the definition of sublinear operator in \eqref{sub-def},
we deduce
\begin{align*}
b_{3Q}Tf(y)
&\leq T(b_{3Q}f)(y)\\
&\leq T((b_{3Q}-b)(f_{1}+f_{2}))(y)+T(bf)(y)\\
&\leq T((b_{3Q}-b)f_{1})(y)+T((b_{3Q}-b)f_{2})(y)+T(bf)(y)\\
&\leq T((b_{3Q}-b)f_{1})(y)+C_{b, f, Q}+T(bf)(y),
\end{align*}
where the last estimate used the condition \eqref{size-1}. So,
\begin{align*}
0<b_{3Q}Tf(y)-T(bf)(y)-C_{b, f, Q}\leq T((b_{3Q}-b)f_{1})(y).
\end{align*}
This and the fact $C_{b, f, Q}>C_{b, f, Q}(x_{Q})$ by terms of \eqref{eq-xQ} allow that
\begin{align}\label{1-F}
{\rm{F_{2}}}
&=\int_{3Q}|b_{3Q}Tf(y)-T(bf)(y)-C_{b, f, Q}+C_{b, f, Q}-C_{b, f, Q}(x_{Q})|dy\notag\\
&\leq\int_{3Q}|b_{3Q}Tf(y)-T(bf)(y)-C_{b, f, Q}|dy+\int_{3Q}|C_{b, f, Q}-C_{b, f, Q}(x_{Q})|dy\notag\\
&\leq\int_{3Q}|T((b-b_{3Q})f_{1})(y)|dy+|3Q|(C_{b, f, Q}-C_{b, f, Q}(x_{Q}))\\
&=:{\rm{F_{21}}+\rm{F_{22}}}\notag.
\end{align}

To estimate $\rm{F_{21}}$, using H\"{o}lder's inequality and the boundedness of $T$ on $L^{p(\cdot)}({\mathbb{R}^n})$,
we see that for any $x\in3Q$ and for some $t$ satisfying $1<t<p^{-},$
\begin{equation*}
\begin{aligned}
{\rm{F_{21}}}
&\leq|3Q|^{1-1/t}\left(\int_{3Q}|T((b-b_{Q})f_{1})(y)|^{t}dy\right)^{1/t}\\
&\lesssim |3Q|^{1-1/t}\left(\int_{3Q}|(b-b_{Q})f(y)|^{t}dy\right)^{1/t}\\
&\lesssim|3Q|\sup_{y\in Q}|b(y)-b_{Q}|\left(\frac{1}{|Q|}\int_{Q}|f(y)|^{t}dy\right)^{1/t}\\
&\lesssim|3Q|^{1+\beta/n}\|b\|_{\dot{\Lambda}_{\beta}}(M(|f|^{t}))^{1/t}(x).
\end{aligned}
\end{equation*}

To estimate $\rm{F_{22}}$, we need the following well known fact. Let $Q^{\ast}\subset Q$. Then
\begin{equation*}
|b_{Q^{\ast}}-b_{Q}|\lesssim\|b\|_{\dot{\Lambda}_{\beta}}|Q|^{\beta/n}.
\end{equation*}
This and $C_{b, f, Q}>C_{b, f, Q}(x_{Q})>C_{b, f, Q}(x_{Q})/3>0$ imply that
\begin{equation*}
\begin{aligned}
{\rm{F_{22}}}
&\leq |3Q|(C_{b, f, Q}-C_{b, f, Q}(x_{Q})/3)\\
&=|3Q|\sum^{\infty}_{k=2}\int_{3^{k}Q\backslash3^{k-1}Q}\sup_{y\in3Q}\left(\frac{1}{(|y-z|)^{n}}-\frac{1}{(|x_{Q}-z|)^{n}}\right)|b(z)-b_{3Q}||f(z)|dz\\
&\lesssim|3Q|\sum^{\infty}_{k=2}\int_{3^{k}Q\backslash3^{k-1}Q}\sup_{y\in3Q}\frac{|y-x_{Q}|^{n}}{|x_{Q}-z|^{2n}}
(|b(z)-b_{3^{k}Q}|+|b_{3^{k}Q}-b_{3Q}|)|f(z)|dz\\
&\lesssim|3Q|\sum^{\infty}_{k=2}\frac{|3Q|}{|3^{k}Q|^{2}}|3^{k}Q|^{\beta/n}\|b\|_{\dot{\Lambda}_{\beta}}\int_{3^{k}Q}|f(z)|dz\\
&\lesssim|3Q|^{1+\beta/n}\|b\|_{\dot{\Lambda}_{\beta}}\sum^{\infty}_{k=2}3^{(k-1)(\beta-1)}M(f)(x)\\
&\lesssim|3Q|^{1+\beta/n}\|b\|_{\dot{\Lambda}_{\beta}}M(f)(x).
\end{aligned}
\end{equation*}
for all $x\in3^{k}Q$ with $k\geq2$. Therefore, combing the estimates of $\rm{F_{21}}$  and $\rm{F_{22}}$, we obtain that $${\rm{F_{2}}}\lesssim|3Q|^{1+\beta/n}\|b\|_{\dot{\Lambda}_{\beta}}M(f)(x) \quad \text{for all} \ x\in3Q.$$

{\it Case 2:\, Estimation of $\rm{F_{2}}$ in the case $b_{3Q}Tf(y)-T(bf)(y)-C_{b, f, Q}<0$ with $y\in3Q$.}
According to the definition of sublinear operator  in \eqref{sub-def} and the condition \eqref{size-1}, we have
\begin{align*}
&|b_{3Q}Tf(y)-T(bf)(y)-C_{b, f, Q}|\\
&=C_{b, f, Q}+T(bf)(y)-b_{3Q}Tf(y)\\
&\leq C_{b, f, Q}+\left(T(bf)(y)-Tf(y)\sup_{Q}b_{3Q}\right)+\left(Tf(y)\sup_{Q}b_{3Q}-b_{3Q}Tf(y)\right)\\
&\leq C_{b, f, Q}+\left(T\left(\sup_{Q}|b-b_{3Q}|f\right)(y)\right)+|Tf(y)|\sup_{Q}\frac{1}{|3Q|}\int_{3Q}b(z)-b_{3Q}dz\\
&\lesssim C_{b, f, Q}+\sup_{Q}\sup_{y\in3Q}\int_{\rn\backslash(3Q)}\frac{|b(z)-b_{3Q}||f(z)|}{|y-z|}dz
+|3Q|^{\beta/n}\|b\|_{\dot{\Lambda}_{\beta}}|Tf(y)|\\
&\lesssim 2\sup_{Q}C_{b, f, Q}+|3Q|^{\beta/n}\|b\|_{\dot{\Lambda}_{\beta}}|Tf(y)|.
\end{align*}
This implies that
\begin{align}\label{2-F}
{\rm{F_{2}}}
&=\int_{3Q}|b_{3Q}Tf(y)-T(bf)(y)-C_{b, f, Q}+C_{b, f, Q}-C_{b, f, Q}(x_{Q})|dy\notag\\
&\leq\int_{3Q}|b_{3Q}Tf(y)-T(bf)(y)-C_{b, f, Q}|dy+\int_{3Q}|C_{b, f, Q}-C_{b, f, Q}(x_{Q})|dy\notag\\
&\lesssim |3Q|2\sup_{Q}C_{b, f, Q}+|3Q||3Q|^{\beta/n}\|b\|_{\dot{\Lambda}_{\beta}}|Tf(y)|+|3Q|(C_{b, f, Q}-C_{b, f, Q}(x_{Q}))\notag\\
&\lesssim |3Q|(3\sup_{Q}C_{b, f, Q}-C_{b, f, Q}(x_{Q}))+|3Q|^{1+\beta/n}\|b\|_{\dot{\Lambda}_{\beta}}|Tf(y)|.
\end{align}
where in the second estimate we used the fact $C_{b, f, Q}>C_{b, f, Q}(x_{Q})$ by terms of \eqref{eq-xQ}.
Note that we repeat the process of estimation of ${\rm{F_{22}}}$ and obtain
$$|3Q|(3\sup_{Q}C_{b, f, Q}-C_{b, f, Q}(x_{Q}))\ls|3Q|^{1+\beta/n}\|b\|_{\dot{\Lambda}_{\beta}}M(f)(x).$$
Therefore,
\begin{align*}
{\rm{F_{2}}}\lesssim|3Q|^{1+\beta/n}\|b\|_{\dot{\Lambda}_{\beta}}(M(f)(x)+|Tf(y)|) \quad \text{for} \ x, y\in3Q.
\end{align*}
This ends the estimate of {\it Case 2}.

Summarizing all the  estimates of {\it Case 1}  and {\it Case 2}, we conclude that
\begin{align*}
{\rm{F_{2}}}\lesssim|3Q|^{1+\beta/n}\|b\|_{\dot{\Lambda}_{\beta}}\left(M(f)(x)+(M(|f|^{t}))^{1/t}(x)+|Tf(y)|\right) \quad \text{for all} \ x, y\in3Q.
\end{align*}
This and the  estimate of ${\rm F}_{1}$ allow that
\begin{equation*}
\begin{aligned}
\frac{1}{|Q|^{1+\beta/n}}\int_{Q}|[b,T]f-([b,T]f)_{Q}|
\lesssim\|b\|_{\dot{\Lambda}_{\beta}}\left(M(f)(x)+(M(|f|^{t}))^{1/t}(x)+|Tf(y)|\right) \quad \text{for all} \ x, y\in3Q.
\end{aligned}
\end{equation*}
Taking the supremum over all $Q$ such that $x\in Q$ on both sides, it follows from \eqref{eq-H3} and \eqref{eq-H3-dot} that
\begin{equation*}
\begin{aligned}
\|[b,T]f\|_{K^{\alpha,q}_{p(\cdot)}\dot{F}^{\beta}_{\infty}(\mathbb{R}^{n})}
\lesssim\|b\|_{\dot{\Lambda}_{\beta}}\left(\|Mf\|_{K^{\alpha,q}_{p(\cdot)}(\mathbb{R}^{n})}+\|(M(|f|^{t}))^{1/t}\|_{K^{\alpha,q}_{p(\cdot)}(\mathbb{R}^{n})}
+\|Tf\|_{K^{\alpha,q}_{p(\cdot)}(\mathbb{R}^{n})}\right)
\end{aligned}
\end{equation*}
and
\begin{equation*}
\begin{aligned}
\|[b,T]f\|_{\dot{K}^{\alpha,q}_{p(\cdot)}\dot{F}^{\beta}_{\infty}(\mathbb{R}^{n})}
\lesssim\|b\|_{\dot{\Lambda}_{\beta}}\left(\|Mf\|_{\dot{K}^{\alpha,q}_{p(\cdot)}(\mathbb{R}^{n})}
+\|(M(|f|^{t}))^{1/t}\|_{\dot{K}^{\alpha,q}_{p(\cdot)}(\mathbb{R}^{n})}
+\|Tf\|_{\dot{K}^{\alpha,q}_{p(\cdot)}(\mathbb{R}^{n})}\right).
\end{aligned}
\end{equation*}
Further, by Lemma 3.2 and the facts that $M$ is bounded on $K^{\alpha,q}_{p(\cdot)}(\mathbb{R}^{n})$ and $\dot{K}^{\alpha,q}_{p(\cdot)}(\mathbb{R}^{n})$, we obtain that (i) and (ii) of Theorem \ref{H1} hold.
\end{proof}

\section{The proof of Theorem \ref{H2}}
\quad
The section is devote to the proof of Theorem \ref{H2}.
To this ends, we show the boundedness of fractional type sublinear operators on Herz spaces with variable exponent in Theorem \ref{T-HH}.

\begin{lem}[\cite{IM1}]\rm\label{L2}
Let $p(\cdot)\in\mathscr{B}(\mathbb{R}^{n})$. Then there exist a constant $C>0$ such that for all ball $B$ in $\mathbb{R}^{n}$
\begin{equation}\label{1.2}
\frac{1}{|B|}\|\chi_{B}\|_{L^{p(\cdot)}(\mathbb{R}^{n})}\|\chi_{B}\|_{L^{p'(\cdot)}(\mathbb{R}^{n})}\leq C.
\end{equation}
\end{lem}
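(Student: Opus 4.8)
This is a standard property of variable Lebesgue spaces (it is the content of \cite{IM1}); the plan is to combine the \emph{norm conjugate formula} for $L^{p(\cdot)}(\rn)$ --- the fact that $L^{p'(\cdot)}(\rn)$ is, up to equivalence of norms, the associate space of $L^{p(\cdot)}(\rn)$ --- with the hypothesis $p(\cdot)\in\mathscr{B}(\rn)$, which says precisely that the Hardy--Littlewood maximal operator $M$ is bounded on $L^{p(\cdot)}(\rn)$.

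First I would fix a ball $B=B(x_{0},r)$ and reduce to estimating $\int_{B}|h|$ for an arbitrary $h\in L^{p(\cdot)}(\rn)$ with $\|h\|_{L^{p(\cdot)}(\rn)}\le1$. The key geometric observation is that $B\subset B(x,2r)$ for every $x\in B$, so that, by the definition \eqref{def-M} of $M$,
\[
\frac{1}{|B|}\int_{B}|h(y)|\,dy\ls Mh(x)\qquad\text{for all }x\in B.
\]
Multiplying by $\chi_{B}(x)$, taking the $L^{p(\cdot)}(\rn)$ norm, and using the boundedness of $M$ on $L^{p(\cdot)}(\rn)$ (this is exactly where $p(\cdot)\in\mathscr{B}(\rn)$ is used), one obtains
\[
\frac{1}{|B|}\Big(\int_{B}|h|\Big)\|\chi_{B}\|_{L^{p(\cdot)}(\rn)}\ls\|Mh\|_{L^{p(\cdot)}(\rn)}\ls\|h\|_{L^{p(\cdot)}(\rn)}\le1,
\]
hence $\int_{B}|h|\ls|B|\,\|\chi_{B}\|_{L^{p(\cdot)}(\rn)}^{-1}$ with a constant independent of $B$ and $h$.

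Then I would invoke the norm conjugate formula: there is $C>0$ depending only on $p(\cdot)$ and $n$ such that
\[
\|\chi_{B}\|_{L^{p'(\cdot)}(\rn)}\le C\sup\Big\{\int_{\rn}|h(x)|\chi_{B}(x)\,dx:\ \|h\|_{L^{p(\cdot)}(\rn)}\le1\Big\}.
\]
Combining this with the bound of the previous step gives $\|\chi_{B}\|_{L^{p'(\cdot)}(\rn)}\ls|B|\,\|\chi_{B}\|_{L^{p(\cdot)}(\rn)}^{-1}$, which is exactly \eqref{1.2} after multiplying through by $|B|^{-1}\|\chi_{B}\|_{L^{p(\cdot)}(\rn)}$. (Conversely, the generalized H\"older inequality yields $|B|=\int_{\rn}\chi_{B}\cdot\chi_{B}\ls\|\chi_{B}\|_{L^{p(\cdot)}(\rn)}\|\chi_{B}\|_{L^{p'(\cdot)}(\rn)}$ for any $p(\cdot)\in\mathscr{P}(\rn)$, so in fact $\|\chi_{B}\|_{L^{p(\cdot)}(\rn)}\|\chi_{B}\|_{L^{p'(\cdot)}(\rn)}\sim|B|$.)

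There is no real obstacle here, as the result is classical, but two points deserve attention. The argument is only superficially asymmetric in $p(\cdot)$ and $p'(\cdot)$: it uses boundedness of $M$ on $L^{p(\cdot)}(\rn)$, and the equivalence $p(\cdot)\in\mathscr{B}(\rn)\Leftrightarrow p'(\cdot)\in\mathscr{B}(\rn)$ recalled in the introduction shows the two exponents could be swapped. The only genuinely nontrivial input is the norm conjugate formula for $L^{p(\cdot)}(\rn)$, which must be quoted from the theory of variable Lebesgue spaces; everything else is elementary, and one should merely check that every implicit constant depends on $p(\cdot)$ and $n$ alone --- which is clear since both the maximal inequality and the conjugate formula are uniform in $B$.
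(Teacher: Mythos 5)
Your argument is correct and is essentially the standard proof of this fact: the paper itself gives no proof, simply citing \cite{IM1}, and the argument there (and in the general theory of variable Lebesgue spaces) is exactly the combination of the pointwise bound $\frac{1}{|B|}\int_B|h|\ls Mh(x)$ for $x\in B$, the boundedness of $M$ on $L^{p(\cdot)}(\rn)$, and the norm conjugate formula that you use. Your closing remark that the generalized H\"older inequality gives the reverse bound, so that in fact $\|\chi_B\|_{L^{p(\cdot)}(\rn)}\|\chi_B\|_{L^{p'(\cdot)}(\rn)}\sim|B|$, is also accurate.
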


\begin{thm}\label{T-HH}
Let $a\in \mathbb{R}$, $0<\lambda<n$, $0<q_{1}<q_{2}\leq\infty,$ $p_{1}(\cdot),p_{2}(\cdot)\in\mathscr{B}(\mathbb{R}^{n})$ and $1/p_{2}(\cdot)=1/p_{1}(\cdot)-\lambda/n$. Assume that $\delta_{1} ,\delta_{2}\in(0, 1)$ satisfy \eqref{1.1} and
$-n\delta_{1}<\alpha<n\delta_{2}$. If a sublinear operator $T_{\lambda}$ satisfies \eqref{size-2} and $T_{\lambda}$ is bounded from $L^{p_{1}(\cdot)}(\mathbb{R}^{n})$ to $L^{p_{2}(\cdot)}(\mathbb{R}^{n})$, then \begin{enumerate}
\item[\rm (i)]
$T_{\lambda}$ is bounded from $\dot{K}^{\alpha,q_{1}}_{p_{1}(\cdot)}(\mathbb{R}^{n})$ to $\dot{K}^{\alpha,q_{2}}_{p_{2}(\cdot)}(\mathbb{R}^{n})$;

\item[\rm (ii)]
$T_{\lambda}$ is bounded from $K^{\alpha,q_{1}}_{p_{1}(\cdot)}(\mathbb{R}^{n})$ to $K^{\alpha,q_{2}}_{p_{2}(\cdot)}(\mathbb{R}^{n})$.
\end{enumerate}
\end{thm}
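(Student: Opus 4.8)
\textbf{Proof proposal for Theorem \ref{T-HH}.}
The plan is to treat the homogeneous case (i) in detail and indicate the minor changes needed for the non-homogeneous case (ii); the argument is the standard ``annulus decomposition'' scheme adapted to fractional-type sublinear operators. Write $\|f\|_{\dot K^{\alpha,q_1}_{p_1(\cdot)}(\rn)}=\left(\sum_{l\in\zz}(2^{\alpha l}\|f\chi_l\|_{L^{p_1(\cdot)}(\rn)})^{q_1}\right)^{1/q_1}$ and decompose $f=\sum_{j\in\zz}f\chi_j=:\sum_j f_j$. Then for each dyadic annulus $E_k$ we split
\begin{equation*}
\|(T_\lambda f)\chi_k\|_{L^{p_2(\cdot)}(\rn)}\le \sum_{j\le k-2}\|(T_\lambda f_j)\chi_k\|_{L^{p_2(\cdot)}(\rn)}+\sum_{j=k-1}^{k+1}\|(T_\lambda f_j)\chi_k\|_{L^{p_2(\cdot)}(\rn)}+\sum_{j\ge k+2}\|(T_\lambda f_j)\chi_k\|_{L^{p_2(\cdot)}(\rn)},
\end{equation*}
using sublinearity \eqref{sub-def} to pass $T_\lambda$ through the (countable) sum. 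The middle sum is controlled directly by the assumed $L^{p_1(\cdot)}\to L^{p_2(\cdot)}$ boundedness of $T_\lambda$, giving $\lesssim \sum_{j=k-1}^{k+1}\|f_j\|_{L^{p_1(\cdot)}(\rn)}$.

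For the two ``tail'' sums I would use the size condition \eqref{size-2}. When $j\le k-2$ and $x\in E_k$, $y\in E_j$ one has $|x-y|\sim 2^k$, so \eqref{size-2} gives $|T_\lambda f_j(x)|\lesssim 2^{k(\lambda-n)}\int_{E_j}|f(y)|\,dy$. By Hölder's inequality in the variable-exponent space (the generalized Hölder inequality $\int|fg|\lesssim \|f\|_{L^{p_1(\cdot)}}\|g\|_{L^{p_1'(\cdot)}}$) this is $\lesssim 2^{k(\lambda-n)}\|f_j\|_{L^{p_1(\cdot)}(\rn)}\|\chi_j\|_{L^{p_1'(\cdot)}(\rn)}$. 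Multiplying by $\|\chi_k\|_{L^{p_2(\cdot)}(\rn)}$ and invoking Lemma \ref{L2} for $p_2(\cdot)$ together with the relation $1/p_2(\cdot)=1/p_1(\cdot)-\lambda/n$, plus the submultiplicativity-type estimate \eqref{1.1} comparing $\|\chi_{B_j}\|$ with $\|\chi_{B_k}\|$, one obtains a geometric-type gain $\|(T_\lambda f_j)\chi_k\|_{L^{p_2(\cdot)}(\rn)}\lesssim 2^{(j-k)n\delta_1}\|f_j\|_{L^{p_1(\cdot)}(\rn)}$ (the power $n\delta_1$ coming from \eqref{1.1}). Symmetrically, for $j\ge k+2$, $x\in E_k$, $y\in E_j$ one has $|x-y|\sim 2^j$ and the same manipulation yields $\|(T_\lambda f_j)\chi_k\|_{L^{p_2(\cdot)}(\rn)}\lesssim 2^{(k-j)n\delta_2}\|f_j\|_{L^{p_1(\cdot)}(\rn)}$, where now the exponent $\delta_2$ comes from the second inequality in \eqref{1.1} applied to $p_2'(\cdot)$ (equivalently to $p_1'(\cdot)$).

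Assembling the three pieces, $2^{\alpha k}\|(T_\lambda f)\chi_k\|_{L^{p_2(\cdot)}(\rn)}\lesssim \sum_{j\in\zz}2^{\alpha k}c_{k-j}\|f_j\|_{L^{p_1(\cdot)}(\rn)}$ where $c_i\lesssim 2^{-in\delta_1}$ for $i\ge 2$ and $c_i\lesssim 2^{in\delta_2}$ for $i\le -2$. Writing $2^{\alpha k}c_{k-j}=c_{k-j}2^{\alpha(k-j)}\cdot 2^{\alpha j}$, the hypothesis $-n\delta_1<\alpha<n\delta_2$ guarantees that $\sum_i c_i 2^{\alpha i}<\infty$, so the convolution kernel $(c_i2^{\alpha i})_{i\in\zz}$ lies in $\ell^1$. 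Then the discrete Young (for $q_2\ge 1$) or the quasi-norm inequality $\|\cdot\|_{\ell^{q_2}}\le\|\cdot\|_{\ell^{q_2}}$-with-$\ell^1$-kernel estimate (for $0<q_2<1$, using $\ell^{q_2}\hookrightarrow\ell^1$-type bounds on the kernel, i.e. $\sum_i (c_i2^{\alpha i})^{q_2}<\infty$) together with $q_1\le q_2$ — which gives $\ell^{q_1}\hookrightarrow\ell^{q_2}$ on the sequence $(2^{\alpha j}\|f_j\|_{L^{p_1(\cdot)}})_j$ — yields
\begin{equation*}
\|T_\lambda f\|_{\dot K^{\alpha,q_2}_{p_2(\cdot)}(\rn)}\lesssim \left(\sum_{j\in\zz}\bigl(2^{\alpha j}\|f_j\|_{L^{p_1(\cdot)}(\rn)}\bigr)^{q_1}\right)^{1/q_1}=\|f\|_{\dot K^{\alpha,q_1}_{p_1(\cdot)}(\rn)}.
\end{equation*}
For (ii), one uses the truncated decomposition $f=\sum_{j\ge0}f\tilde\chi_j$ and the same three-range splitting with $k\ge0$; the inner term $m=0$ is handled by $L^{p_1(\cdot)}\to L^{p_2(\cdot)}$ boundedness on $B_0$, and nothing else changes. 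The main obstacle — and the place where care is required — is bookkeeping the variable-exponent norm estimates: one must repeatedly convert $\|\chi_{E_j}\|_{L^{p(\cdot)}}$ to $\|\chi_{B_j}\|_{L^{p(\cdot)}}$, use \eqref{1.1} to transfer from scale $j$ to scale $k$, and combine Lemma \ref{L2} with the exponent identity $1/p_2(\cdot)=1/p_1(\cdot)-\lambda/n$ so that the $2^{k(\lambda-n)}$ factor from the size condition is exactly absorbed, producing clean geometric decay governed by $\delta_1,\delta_2$; getting the summation condition $-n\delta_1<\alpha<n\delta_2$ to line up is then automatic.
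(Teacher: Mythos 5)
Your overall scheme is the paper's own: decompose $f=\sum_j f\chi_j$, split into the ranges $j\le k-2$, $|j-k|\le1$, $j\ge k+2$, handle the diagonal by the assumed $L^{p_1(\cdot)}\to L^{p_2(\cdot)}$ boundedness, handle the tails by the size condition \eqref{size-2} plus the generalized H\"older inequality, Lemma \ref{L2} and \eqref{1.1}, and finally reduce $q_2$ to $q_1$ (you invoke $\ell^{q_1}\hookrightarrow\ell^{q_2}$, the paper uses $(\sum_i a_i)^{q_1/q_2}\le\sum_i a_i^{q_1/q_2}$ at the outset; these are interchangeable). The one point where your write-up goes wrong is precisely the bookkeeping you yourself flag at the end: you have the roles of $\delta_1$ and $\delta_2$ swapped, and with your assignment the summability claim does not follow from the hypothesis.

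Concretely, for $j\le k-2$ the estimate $|T_\lambda(f\chi_j)(x)|\lesssim 2^{-k(n-\lambda)}\|f\chi_j\|_{L^{p_1(\cdot)}(\rn)}\|\chi_{B_j}\|_{L^{p_1'(\cdot)}(\rn)}$ is multiplied by $\|\chi_{B_k}\|_{L^{p_2(\cdot)}(\rn)}\sim 2^{k(n-\lambda)}\|\chi_{B_k}\|^{-1}_{L^{p_1'(\cdot)}(\rn)}$, so what survives is the ratio $\|\chi_{B_j}\|_{L^{p_1'(\cdot)}(\rn)}/\|\chi_{B_k}\|_{L^{p_1'(\cdot)}(\rn)}\lesssim 2^{(j-k)n\delta_2}$ by the \emph{second} inequality in \eqref{1.1}; together with the weight $2^{\alpha(k-j)}$ this sums exactly because $\alpha<n\delta_2$. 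Symmetrically, for $j\ge k+2$ one writes $\|\chi_{B_j}\|_{L^{p_1'(\cdot)}(\rn)}\sim 2^{j(n-\lambda)}\|\chi_{B_j}\|^{-1}_{L^{p_2(\cdot)}(\rn)}$ and ends up with $\|\chi_{B_k}\|_{L^{p_2(\cdot)}(\rn)}/\|\chi_{B_j}\|_{L^{p_2(\cdot)}(\rn)}\lesssim 2^{(k-j)n\delta_1}$ by the \emph{first} inequality in \eqref{1.1}, which requires $\alpha>-n\delta_1$. With your kernel bounds ($c_i\lesssim 2^{-in\delta_1}$ for $i\ge2$ and $c_i\lesssim 2^{in\delta_2}$ for $i\le-2$), the series $\sum_i c_i2^{\alpha i}$ converges only if $\alpha<n\delta_1$ and $\alpha>-n\delta_2$, which is not what $-n\delta_1<\alpha<n\delta_2$ gives. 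This is a labelling error rather than a conceptual one, and once the two $\delta$'s are interchanged your argument coincides with the paper's proof; but as written the convergence of the convolution kernel is not justified by the stated hypotheses.
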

\begin{proof}
To show (i), for any $0<q_{1}<q_{2}<1$ and $0<q_{1}/q_{2}<1$, by
\begin{equation*}
\left(\sum^{\infty}_{i=1}a_{i}\right)^{q_{1}/q_{2}}\leq\sum^{\infty}_{i=1}(a_{i})^{q_{1}/q_{2}}, (a_{1},a_{2},a_{3}...>0),
\end{equation*}
we deduce
\begin{equation*}
\begin{aligned}
\|T_{\lambda}f\|^{q_{1}}_{\dot{K}^{\alpha,q_{2}}_{p_{2}(\cdot)}(\mathbb{R}^{n})}
&=\left\{\sum^{\infty}_{k=-\infty}2^{k\alpha q_{2}}\|(T_{\lambda}f)\chi_{k}\|^{q_{2}}_{L^{p_{2}(\cdot)}(\mathbb{R}^{n})}\right\}^{q_{1}/q_{2}}\\
&\leq\sum^{\infty}_{k=-\infty}2^{k\alpha q_{1}}\|(T_{\lambda}f)\chi_{k}\|^{q_{1}}_{L^{p_{2}(\cdot)}(\mathbb{R}^{n})}\\
&\lesssim\sum^{\infty}_{k=-\infty}2^{k\alpha q_{1}}\left\|\left(\sum^{k-2}_{j=-\infty}|T_{\lambda}(f\chi_{j})|\right)\chi_{k}\right\|^{q_{1}}_{L^{p_{2}(\cdot)}(\mathbb{R}^{n})}\\
&\qquad+\sum^{\infty}_{k=-\infty}2^{k\alpha q_{1}}\left\|\left(\sum^{k+1}_{j=k-1}|T_{\lambda}(f\chi_{j})|\right)\chi_{k}\right\|^{q_{1}}_{L^{p_{2}(\cdot)}(\mathbb{R}^{n})}\\
&\qquad+\sum^{\infty}_{k=-\infty}2^{k\alpha q_{1}}\left\|\left(\sum^{\infty}_{j=k+2}|T_{\lambda}(f\chi_{j})|\right)\chi_{k}]\right\|^{q_{1}}_{L^{p_{2}(\cdot)}(\mathbb{R}^{n})}\\
&=:\rm{I_{1}+I_{2}+I_{3}}.
\end{aligned}
\end{equation*}

First, we estimate $\rm{I_{2}}$. From the fact that $T_{\lambda}f$ is bounded from $L^{p_{1}(\cdot)}(\mathbb{R}^{n})$ to $L^{p_{2}(\cdot)}(\mathbb{R}^{n})$,
it follows that
\begin{align*}
\rm{I_{2}}&=\sum^{\infty}_{k=-\infty}2^{k\alpha q_{1}}\left\|\left(\sum^{k+1}_{j=k-1}|T_{\lambda}(f\chi_{j})|\right)\chi_{k}\right\|^{q_{1}}_{L^{p_{2}(\cdot)}(\mathbb{R}^{n})}\\
&\lesssim\sum^{\infty}_{k=-\infty}2^{k\alpha q_{1}}\left\|\left(\sum^{k+1}_{j=k-1}(f\chi_{j})\right)\chi_{k}\right\|^{q_{1}}_{L^{p_{1}(\cdot)}(\mathbb{R}^{n})}\\
&\lesssim\sum^{\infty}_{k=-\infty}2^{k\alpha q_{1}}\|f\chi_{k}\|^{q_{1}}_{L^{p_{1}(\cdot)}(\mathbb{R}^{n})}\\
&\lesssim\|f\|^{q_{1}}_{\dot{K}^{\alpha,q_{1}}_{p_{1}(\cdot)}(\mathbb{R}^{n})}.
\end{align*}

Now, we estimates $\rm{I_{1}}$. For each $k\in\mathbb{Z}$, $j\leq k-2$ and a.e. $x\in E_{k}=B_{k}\backslash B_{k-1}$,
by size condition of $T_{\lambda}$ and generalized H\"{o}lder's inequality, we see that
\begin{equation*}
\begin{aligned}
|T_{\lambda}(f\chi_{j})(x)|
\lesssim\int_{E_{k}}|x-y|^{-(n-\lambda)}|f(y)|dy
\lesssim2^{-k(n-\lambda)}\|f\chi_{j}\|_{L^{p_{1}(\cdot)}(\mathbb{R}^{n})}\|\chi_{j}\|_{L^{p'_{1}(\cdot)}(\mathbb{R}^{n})}.
\end{aligned}
\end{equation*}
This implies
\begin{equation*}
\begin{aligned}
\rm{I_{1}}&= \sum^{\infty}_{k=-\infty}2^{k\alpha q_{1}}\left\|\left(\sum^{k-2}_{j=-\infty}|T_{\lambda}(f\chi_{j})|\right)\chi_{k}\right\|^{q_{1}}_{L^{p_{2}(\cdot)}(\mathbb{R}^{n})}\\
&\lesssim\sum^{\infty}_{k=-\infty}2^{k\alpha q_{1}}\left(\sum^{k-2}_{j=-\infty}2^{-k(n-\lambda)}\|f\chi_{j}\|_{L^{p_{1}(\cdot)}(\mathbb{R}^{n})}
\|\chi_{B_{j}}\|_{L^{p'_{1}(\cdot)}(\mathbb{R}^{n})}\|\chi_{B_{k}}\|_{L^{p_{2}(\cdot)}(\mathbb{R}^{n})}\right)^{q_{1}}\\
&\lesssim\sum^{\infty}_{k=-\infty}2^{k\alpha q_{1}}\left(\sum^{k-2}_{j=-\infty}\|f\chi_{j}\|_{L^{p_{1}(\cdot)}(\mathbb{R}^{n})}
\|\chi_{B_{j}}\|_{L^{p'_{1}(\cdot)}(\mathbb{R}^{n})}\|\chi_{B_{k}}\|^{-1}_{L^{p'_{1}(\cdot)}(\mathbb{R}^{n})}\right)^{q_{1}}\\
&\lesssim\sum^{\infty}_{k=-\infty}2^{k\alpha q_{1}}\left(\sum^{k-2}_{j=-\infty}\|f\chi_{j}\|_{L^{p_{1}(\cdot)}(\mathbb{R}^{n})}
\frac{\|\chi_{B_{j}}\|_{L^{p'_{1}(\cdot)}(\mathbb{R}^{n})}}{\|\chi_{B_{k}}\|_{L^{p'_{1}(\cdot)}(\mathbb{R}^{n})}}\right)^{q_{1}}\\
&\lesssim\sum^{\infty}_{k=-\infty}\left(\sum^{k-2}_{j=-\infty}2^{j\alpha}\|f\chi_{j}\|_{L^{p_{1}(\cdot)}(\mathbb{R}^{n})}
2^{(j-k)(n\delta_{2}-\alpha)}\right)^{q_{1}},
\end{aligned}
\end{equation*}
where in the antepenultimate step we used the fact
\begin{equation*}
\|\chi_{B_{k}}\|_{L^{p_{2}(\cdot)}(\mathbb{R}^{n})}
=2^{k(n-\lambda)}\|\chi_{B_{k}}\|^{-1}_{L^{p'_{1}(\cdot)}(\mathbb{R}^{n})}
\quad \text{for} \ 1/p_{2}(\cdot)=1/p_{1}(\cdot)-\lambda/n.
\end{equation*}
To continue calculations,~we consider the two cases $``0<q_{1}\leq1"$ and $``1<q_{1}<\infty"$.\\

If $0<q_{1}\leq1,n\delta_{2}-\alpha>0$,~then we get
\begin{equation*}
\begin{aligned}
\rm{I_{1}}&\lesssim\sum^{\infty}_{k=-\infty}\sum^{k-2}_{j=-\infty}2^{j\alpha q_{1}}\|f\chi_{j}\|^{q_{1}}_{L^{p_{1}(\cdot)}(\mathbb{R}^{n})}2^{(j-k)(n\delta_{2}-\alpha)q_{1}}\\
&\lesssim\sum^{\infty}_{j=-\infty}2^{j\alpha q_{1}}\|f\chi_{j}\|^{q_{1}}_{L^{p_{1}(\cdot)}(\mathbb{R}^{n})}\sum^{\infty}_{k=j+2}2^{(j-k)(n\delta_{2}-\alpha)q_{1}}\\
&\lesssim\sum^{\infty}_{j=-\infty}2^{j\alpha q_{1}}\|f\chi_{j}\|^{q_{1}}_{L^{p_{1}(\cdot)}(\mathbb{R}^{n})}\\
&\lesssim\|f\|^{q_{1}}_{\dot{K}^{\alpha,q_{1}}_{p_{1}(\cdot)}(\mathbb{R}^{n})}.\\
\end{aligned}
\end{equation*}

If $1<q_{1}<\infty,n\delta_{2}-\alpha>0$,~then we use H\"{o}lder's inequality and obtain
\begin{align*}
\rm{I_{1}}&\lesssim\sum^{\infty}_{k=-\infty}\left(\sum^{k-2}_{j=-\infty}2^{j\alpha q_{1}}\|f\chi_{j}\|^{q_{1}}_{L^{p_{1}(\cdot)}(\mathbb{R}^{n})}2^{(j-k)(n\delta_{2}-\alpha)q_{1}/2}\right)^{\frac{q_{1}}{q'_{1}}}
\left(\sum^{k-2}_{j=-\infty}2^{(j-k)(n\delta_{2}-\alpha)q'_{1}/2}\right)^{\frac{q_{1}}{q'_{1}}}\\
&\lesssim\sum^{\infty}_{k=-\infty}\sum^{k-2}_{j=-\infty}2^{j\alpha q_{1}}\|f\chi_{j}\|^{q_{1}}_{L^{p_{1}(\cdot)}(\mathbb{R}^{n})}2^{(j-k)(n\delta_{2}-\alpha)q_{1}/2}\\
&\lesssim\sum^{\infty}_{j=-\infty}2^{j\alpha q_{1}}\|f\chi_{j}\|^{q_{1}}_{L^{p_{1}(\cdot)}(\mathbb{R}^{n})}\sum^{\infty}_{k=j+2}2^{(j-k)(n\delta_{2}-\alpha)q_{1}/2}\\
&\lesssim\|f\|^{q_{1}}_{\dot{K}^{\alpha,q_{1}}_{p_{1}(\cdot)}(\mathbb{R}^{n})}.\\
\end{align*}

Next,~we estimate $\rm{I_{3}}$.
For each $k\in\mathbb{Z}$, $j\geq k+2$ and a.e. $x\in E_{k}=B_{k}\backslash B_{k-1}$, size condition of $T_{\lambda}$ and generalized H\"{o}lder's inequality imply
\begin{equation*}
\begin{aligned}
|T_{\lambda}(f\chi_{j})(x)|
\lesssim\int_{E_{k}}|x-y|^{-(n-\lambda)}|f(y)|dy
\lesssim2^{-j(n-\lambda)}\|f\chi_{j}\|_{L^{p_{1}(\cdot)}(\mathbb{R}^{n})}\|\chi_{j}\|_{L^{p'_{1}(\cdot)}(\mathbb{R}^{n})}.
\end{aligned}
\end{equation*}
Therefore
\begin{equation*}
\begin{aligned}
\rm{I_{3}}
&\lesssim\sum^{\infty}_{k=-\infty}2^{k\alpha q_{1}}\left(\sum^{\infty}_{j=k+2}2^{-j(n-\lambda)}\|f\chi_{j}\|_{L^{p_{1}(\cdot)}(\mathbb{R}^{n})}
\|\chi_{B_{j}}\|_{L^{p'_{1}(\cdot)}(\mathbb{R}^{n})}\|\chi_{B_{k}}\|_{L^{p_{2}(\cdot)}(\mathbb{R}^{n})}\right)^{q_{1}}\\
%&\lesssim\sum^{\infty}_{k=-\infty}2^{k\alpha q_{1}}\left(\sum^{\infty}_{j=k+2}\|f\chi_{j}\|_{L^{p_{1}(\cdot)}(\mathbb{R}^{n})}
%\|\chi_{B_{j}}\|^{-1}_{L^{p_{1}(\cdot)}(\mathbb{R}^{n})}\|\chi_{B_{k}}\|_{L^{p_{2}(\cdot)}(\mathbb{R}^{n})}\right)^{q_{1}}.\\
&\lesssim\sum^{\infty}_{k=-\infty}2^{k\alpha q_{1}}\left(\sum^{\infty}_{j=k+2}\|f\chi_{j}\|_{L^{p_{1}(\cdot)}(\mathbb{R}^{n})}
\frac{\|\chi_{B_{k}}\|_{L^{p_{2}(\cdot)}(\mathbb{R}^{n})}}{\|\chi_{B_{j}}\|_{L^{p_{2}(\cdot)}(\mathbb{R}^{n})}}\right)^{q_{1}}\\
&\lesssim\sum^{\infty}_{k=-\infty}\left(\sum^{\infty}_{j=k+2}2^{j\alpha}
\|f\chi_{j}\|_{L^{p_{1}(\cdot)}(\mathbb{R}^{n})}2^{(k-j)(n\delta_{1}+\alpha)}\right)^{q_{1}},
\end{aligned}
\end{equation*}
where in the penultimate step we used the fact
\begin{equation*}
\|\chi_{B_{j}}\|_{L^{p'_{1}(\cdot)}(\mathbb{R}^{n})}
=2^{j(n-\lambda)}\|\chi_{B_{j}}\|^{-1}_{L^{p_{2}(\cdot)}(\mathbb{R}^{n})}
\quad \text{for} \ 1/p_{2}(\cdot)=1/p_{1}(\cdot)-\lambda/n.
\end{equation*}
Similarly, to continue calculations, we consider the two cases $``0<q_{1}\leq1"$ and $``1<q_{1}<\infty"$.

If $0<q_{1}\leq1,n\delta_{1}+\alpha>0$, then we get
\begin{equation*}
\begin{aligned}
\rm{I_{3}}
&\lesssim\sum^{\infty}_{k=-\infty}\sum^{\infty}_{j=k+2}2^{j\alpha q_{1}}\|f\chi_{j}\|^{q_{1}}_{L^{p_{1}(\cdot)}(\mathbb{R}^{n})}2^{(k-j)(n\delta_{1}+\alpha)q_{1}}\\
&\lesssim\sum^{\infty}_{j=-\infty}2^{j\alpha q_{1}}\|f\chi_{j}\|^{q_{1}}_{L^{p_{1}(\cdot)}(\mathbb{R}^{n})}\sum^{j-2}_{k=-\infty}2^{(k-j)(n\delta_{1}+\alpha)q_{1}}\\
&\lesssim\sum^{\infty}_{j=-\infty}2^{j\alpha q_{1}}\|f\chi_{j}\|^{q_{1}}_{L^{p_{1}(\cdot)}(\mathbb{R}^{n})}\\
&\lesssim\|f\|^{q_{1}}_{\dot{K}^{\alpha,q_{1}}_{p_{1}(\cdot)}(\mathbb{R}^{n})}.\\
\end{aligned}
\end{equation*}

If $1<q_{1}<\infty,n\delta_{1}+\alpha>0$,~then we use H\"{o}lder's inequality and obtain
\begin{equation*}
\begin{aligned}
\rm{I_{3}}&\lesssim\sum^{\infty}_{k=-\infty}\left(\sum^{\infty}_{j=k+2}2^{j\alpha q_{1}}\|f\chi_{j}\|^{q_{1}}_{L^{p_{1}(\cdot)}(\mathbb{R}^{n})}2^{(k-j)(n\delta_{1}+\alpha)q_{1}/2}\right)^{\frac{q_{1}}{q'_{1}}}
\left(\sum^{\infty}_{j=k+2}2^{(k-j)(n\delta_{1}+\alpha)q'_{1}/2}\right)^{\frac{q_{1}}{q'_{1}}}\\
&\lesssim\sum^{\infty}_{k=-\infty}\sum^{\infty}_{j=k+2}2^{j\alpha q_{1}}\|f\chi_{j}\|^{q_{1}}_{L^{p_{1}(\cdot)}(\mathbb{R}^{n})}2^{(k-j)(n\delta_{1}+\alpha)q_{1}/2}\\
&\lesssim\sum^{\infty}_{j=-\infty}2^{j\alpha q_{1}}\|f\chi_{j}\|^{q_{1}}_{L^{p_{1}(\cdot)}(\mathbb{R}^{n})}\sum^{j+2}_{k=-\infty}2^{(k-j)(n\delta_{1}+\alpha)q_{1}/2}\\
&\lesssim\|f\|^{q_{1}}_{\dot{K}^{\alpha,q_{1}}_{p_{1}(\cdot)}(\mathbb{R}^{n})}.\\
\end{aligned}
\end{equation*}

Summarizing the  estimates of ${\rm I_{1}}$ and ${\rm I_{3}}$, we conclude that (i) holds.  The proof of (ii) is similar to (i). The details being omitted. We finish the proof of Theorem \ref{T-HH}.
\end{proof}

From the proof in\cite[pp.\,71-72]{RA} and \cite[Lemma 3.1]{FC1}, it follows that
$$\left\|\sup_{Q}\frac{1}{|Q|^{1+\gamma/n}}\int_{Q}|h^{Q}|\right\|_{L^{q(\cdot)}(\mathbb{R}^{n})}\leq C\left\|\sup_{Q}\frac{1}{|Q|^{1+\gamma/n+\alpha/n}}\int_{Q}|h^{Q}|\right\|_{L^{p(\cdot)}(\mathbb{R}^{n})}.$$
By the above fact, the following lemma follows from the definitions of $\dot{K}^{\alpha,q}_{p(\cdot)}(\mathbb{R}^{n})$ and $K^{\alpha,q}_{p(\cdot)}(\mathbb{R}^{n})$.

\begin{lem}\label{L5}
Let $p_{1}(\cdot),p_{2}(\cdot)\in\mathscr{B}(\mathbb{R}^{n})$ and $\frac{1}{p_{1}(\cdot)}-\frac{1}{p_{2}(\cdot)}=\frac{\lambda}{n}$.
Suppose that function $h^{Q}$ is  defined on the cube $Q$. Then for $0\leq\beta$,
\begin{equation*}
\left\|\sup_{Q}\frac{1}{|Q|^{1+\beta/n}}\int_{Q}|h^{Q}|\right\|_{\dot{K}^{\alpha, q}_{p_{2}(\cdot)}(\mathbb{R}^{n})}\leq C\left\|\sup_{Q}\frac{1}{|Q|^{1+\beta/n+\lambda/n}}\int_{Q}|h^{Q}|\right\|_{\dot{K}^{\alpha,q}_{p_{1}(\cdot)}(\mathbb{R}^{n})}
\end{equation*}
and
\begin{equation*}
\left\|\sup_{Q}\frac{1}{|Q|^{1+\beta/n}}\int_{Q}|h^{Q}|\right\|_{K^{\alpha,q}_{p_{2}(\cdot)}(\mathbb{R}^{n})}\leq C\left\|\sup_{Q}\frac{1}{|Q|^{1+\beta/n+\lambda/n}}\int_{Q}|h^{Q}|\right\|_{K^{\alpha,q}_{p_{1}(\cdot)}(\mathbb{R}^{n})},
\end{equation*}
where the constant C depends only on $p_{1}(\cdot),p_{2}(\cdot),q,\alpha$ and n.
\end{lem}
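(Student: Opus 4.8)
The plan is to reduce everything to a pointwise inequality at the level of the local maximal-type functionals and then apply the scalar weighted estimate quoted from \cite{RA} and \cite{FC1} inside each annulus $E_k$. Concretely, set
\[
G_\beta(x):=\sup_{Q\ni x}\frac{1}{|Q|^{1+\beta/n}}\int_Q|h^Q|,\qquad
H_{\beta,\lambda}(x):=\sup_{Q\ni x}\frac{1}{|Q|^{1+\beta/n+\lambda/n}}\int_Q|h^Q|,
\]
so that the asserted inequality reads $\|G_\beta\|_{\dot K^{\alpha,q}_{p_2(\cdot)}}\lesssim\|H_{\beta,\lambda}\|_{\dot K^{\alpha,q}_{p_1(\cdot)}}$ and similarly in the non-homogeneous case. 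First I would unwind the definition of the norm:
\[
\|G_\beta\|_{\dot K^{\alpha,q}_{p_2(\cdot)}(\rn)}
=\Big(\sum_{l=-\infty}^{\infty}\big(2^{\alpha l}\|G_\beta\chi_l\|_{L^{p_2(\cdot)}(\rn)}\big)^q\Big)^{1/q},
\]
so it suffices to prove, for each fixed $l\in\zz$,
\[
\|G_\beta\chi_l\|_{L^{p_2(\cdot)}(\rn)}\lesssim\|H_{\beta,\lambda}\chi_l\|_{L^{p_1(\cdot)}(\rn)},
\]
with a constant independent of $l$; summing the $q$-th powers against the weights $2^{\alpha l q}$ then finishes both parts of the lemma. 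The non-homogeneous statement follows the same way using $\tilde\chi_m$ and $\sum_{m\ge0}$.

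The main work is thus the fixed-$l$ $L^{p(\cdot)}$-to-$L^{p(\cdot)}$ passage, and this is exactly the content of the scalar inequality recalled in the excerpt just before the lemma, namely
\[
\Big\|\sup_{Q}\frac{1}{|Q|^{1+\gamma/n}}\int_Q|h^Q|\Big\|_{L^{q(\cdot)}(\rn)}
\le C\Big\|\sup_{Q}\frac{1}{|Q|^{1+\gamma/n+\alpha/n}}\int_Q|h^Q|\Big\|_{L^{p(\cdot)}(\rn)},
\]
coming from \cite[pp.\,71--72]{RA} and \cite[Lemma 3.1]{FC1}, applied with $\gamma=\beta$, with the fractional gap $\lambda/n$ in place of $\alpha/n$, and with exponents $p_1(\cdot),p_2(\cdot)$ satisfying $1/p_1(\cdot)-1/p_2(\cdot)=\lambda/n$. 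Since that inequality is stated for the global suprema over all cubes, I would apply it to the truncated data $h^Q\chi_l$ (equivalently restrict attention to cubes meeting $E_l$), or simply note that multiplying by $\chi_l$ commutes harmlessly with taking the norm because $G_\beta$ is already localized; in either reading the scalar result delivers the desired bound on $\|G_\beta\chi_l\|_{L^{p_2(\cdot)}}$ by $\|H_{\beta,\lambda}\chi_l\|_{L^{p_1(\cdot)}}$ with a constant depending only on $p_1(\cdot),p_2(\cdot),\lambda,n$ — in particular uniform in $l$, which is what the summation needs.

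The only genuine subtlety — and the step I expect to be the main obstacle — is making the localization clean: the supremum defining $G_\beta(x)$ ranges over all cubes containing $x$, including very large ones, so $G_\beta\chi_l$ is not literally a functional built only from data on $E_l$, and one must check that the scalar inequality can still be invoked with a constant not depending on $l$. This is handled by the scaling/translation invariance of the scalar estimate: the constant $C$ in the $L^{q(\cdot)}\!-\!L^{p(\cdot)}$ bound depends only on the exponents and $n$, not on any particular cube or its location, so after freezing $l$ one simply applies it verbatim. With that observed, the proof is: (1) expand the $\dot K^{\alpha,q}_{p(\cdot)}$ (resp.\ $K^{\alpha,q}_{p(\cdot)}$) norm as a weighted $\ell^q$ sum of $L^{p(\cdot)}$ norms of $\chi_l$-pieces; (2) apply the scalar inequality from \cite{RA},\cite{FC1} to each piece to trade $p_2(\cdot)$ for $p_1(\cdot)$ at the cost of the extra factor $|Q|^{-\lambda/n}$; (3) reassemble the weighted $\ell^q$ sum to recognize the right-hand side norm. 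Everything else is bookkeeping, which is why the excerpt says the lemma "follows from the definitions."
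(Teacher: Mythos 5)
Your reduction to a fixed-annulus inequality is exactly where the argument breaks down. The scalar estimate you quote from \cite{RA} and \cite{FC1} is a \emph{global} $L^{p_1(\cdot)}(\rn)\to L^{p_2(\cdot)}(\rn)$ embedding of Sobolev type (its proof is a Hedberg-type argument that trades the unbounded factor $|Q|^{\lambda/n}$ against the full $L^{p_1(\cdot)}$ norm of the right-hand functional), and it does not localize to $\|G_\beta\chi_l\|_{L^{p_2(\cdot)}}\lesssim\|H_{\beta,\lambda}\chi_l\|_{L^{p_1(\cdot)}}$ with a constant uniform in $l$. Neither of your proposed fixes repairs this: replacing $h^Q$ by $h^Q\chi_{E_l}$ only \emph{decreases} both functionals, so the resulting inequality controls something smaller than $G_\beta$ on $E_l$ — the wrong direction for an upper bound — and the claim that ``$G_\beta$ is already localized'' is false, since for $x\in E_l$ the supremum defining $G_\beta(x)$ runs over arbitrarily large cubes whose data live on distant annuli (restricting instead to cubes meeting $E_l$ leaves the left side unchanged but produces a right-hand functional supported on all of $\rn$, not on $E_l$). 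A further warning sign: if the term-by-term bound held uniformly in $l$, the lemma would be true with no restriction whatsoever on $\alpha$ and $q$, and the paper's Theorem \ref{T-HH} — which needs $-n\delta_1<\alpha<n\delta_2$ and the three-part annulus decomposition — would be superfluous.

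The missing idea is a pointwise domination that converts the exponent gap into a fractional maximal function. For any cube $Q\ni x$ and every $y\in Q$ one has $|Q|^{-1-\beta/n-\lambda/n}\int_Q|h^Q|\le H_{\beta,\lambda}(y)$, hence, averaging over $y\in Q$,
\begin{equation*}
\frac{1}{|Q|^{1+\beta/n}}\int_Q|h^Q|
\le |Q|^{\lambda/n}\cdot\frac{1}{|Q|}\int_Q H_{\beta,\lambda}(y)\,dy
\le C\,M_\lambda\bigl(H_{\beta,\lambda}\bigr)(x),
\end{equation*}
so that $G_\beta\le C\,M_\lambda(H_{\beta,\lambda})$ pointwise, where $M_\lambda$ is the fractional maximal operator. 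The lemma then follows from the boundedness of $M_\lambda$ (equivalently of $I_\lambda$, via $M_\lambda f\lesssim I_\lambda|f|$) from $\dot K^{\alpha,q}_{p_1(\cdot)}(\rn)$ to $\dot K^{\alpha,q}_{p_2(\cdot)}(\rn)$ and from $K^{\alpha,q}_{p_1(\cdot)}(\rn)$ to $K^{\alpha,q}_{p_2(\cdot)}(\rn)$, i.e.\ from Theorem \ref{T-HH} together with the $L^{p_1(\cdot)}\to L^{p_2(\cdot)}$ boundedness of $I_\lambda$. It is this Herz-space fractional-integration step — carried out via the near/far annulus decomposition and the indices $\delta_1,\delta_2$ of \eqref{1.1} — that carries the actual content of Lemma \ref{L5}; the paper's own one-line justification hides it, and your expansion of the Herz norm is the right first move, but it must be fed the pointwise bound above rather than a nonexistent localized version of the Lebesgue-space inequality.
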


\begin{proof}[\bf{Proof of Theorem \ref{H2}}]
First, we verify (i) of Theorem \ref{H2}.
Fix a cube $Q=Q(c_{Q},t)$ with $c_{Q}\in\rn$ and $t>0$. Let
$3^{k}Q=Q(c_{Q}, 3^{k}t)$ and $z\in3^{k}Q\backslash(3^{k-1}Q)$ for $k\geq2$.
For $g\in L^{p_{1}(\cdot)}(\mathbb{R}^{n}),$ let $g_{1}=g\chi_{3Q}$ and $g_{2}=g-g_{1}$.
From \eqref{eq-xQ}, it follows that there exists $x_{Q}\in Q$ such that
\begin{align}\label{C-xQ-C}
C_{b, g, Q}(x_{Q})
:=3\int_{\rn\backslash(3Q)}\frac{|b(z)-b_{3Q}||g(z)|}{|x_{Q}-z|^{n-\lambda}}dz
\leq\sup_{y\in3Q}\int_{\rn\backslash(3Q)}\frac{|b(z)-b_{3Q}||g(z)|}{|y-z|^{n-\lambda}}dz
=:C_{b, g, Q}.
\end{align}
Using \eqref{eq-H3-dot}, we repeat the process of \eqref{eq-f123} and obtain
\begin{equation*}
\begin{aligned}
&\|[b,T_{\lambda}](g)\|_{\dot{K}^{\alpha,q_{2}}_{p_{2}(\cdot)}\dot{F}^{\beta}_{\infty}}\\
&\sim\left\|\sup_{Q}\frac{1}{|Q|^{1+\beta/n}}\int_{Q}|[b,T_{\lambda}](g)(y)-([b,T_{\lambda}](g))_{Q}|dy\right\|_{\dot{K}^{\alpha,q_{2}}_{p_{2}(\cdot)}}\\
&\lesssim\left\|\sup_{Q}\frac{1}{|Q|^{1+\beta/n}}\int_{Q}|[b, T_{\lambda}](g)(y)
-C_{b, g, Q}(x_{Q})|dy\right\|_{\dot{K}^{\alpha,q_{2}}_{p_{2}(\cdot)}}\\
&\lesssim\left\|\sup_{Q}\frac{1}{|Q|^{1+\beta/n}}\int_{3Q}|(b(y)-b_{3Q})Tg(y)|dy\right\|_{\dot{K}^{\alpha,q_{2}}_{p_{2}(\cdot)}}\\
&\qquad+\left\|\sup_{Q}\frac{1}{|Q|^{1+\beta/n}}\int_{3Q}|b_{3Q}Tg(y)-T(gf)(y)-C_{b, g, Q}(x_{Q})|dy\right\|_{\dot{K}^{\alpha,q_{2}}_{p_{2}(\cdot)}}\\
&=:\rm{J_{1}+J_{2}},
\end{aligned}
\end{equation*}

To estimate $\rm{J_{1}}$, by Lemma \ref{L3}, we deduce that for all $x\in Q$,
\begin{align*}
\frac{1}{|Q|^{1+\beta/n}}\int_{Q}|(b(y)-b_{Q})T_{\lambda}(g)(y)|dy
&\lesssim\frac{1}{|Q|^{\beta/n}}\sup_{y\in Q}|(b(y)-b_{Q})|
\left(\frac{1}{|Q|}\int_{Q}|T_{\lambda}(g)(y)|dy\right)\\
&\lesssim\|b\|_{\dot{\Lambda}_{\beta}}M(T_{\lambda}(g))(x).
\end{align*}
This and Theorem \ref{T-HH} imply that
$${\rm{J}}_{1}\lesssim\|b\|_{\dot{\Lambda}_{\beta}}\|T_{\lambda}(g)\|_{\dot{K}^{\alpha,q_{2}}_{p_{2}(\cdot)}(\mathbb{R}^{n})}
\lesssim\|b\|_{\dot{\Lambda}_{\beta}}\|g\|_{\dot{K}^{\alpha,q_{1}}_{p_{1}(\cdot)}(\mathbb{R}^{n})}.$$

Now, we consider ${\rm{J}}_{2}$. According to \eqref{1-F} and \eqref{2-F} in Theorem \ref{H1}, by Theorem \ref{T-HH}, we deduce
\begin{align*}
{\rm{J}}_{2}
&\lesssim\left\|\sup_{Q}\frac{1}{|Q|^{1+\beta/n}}
\int_{Q}|T_{\lambda}((b-b_{Q})g_{1})(y)|dy\right\|_{\dot{K}^{\alpha,q_{2}}_{p_{2}(\cdot)}}
+\|b\|_{\dot{\Lambda}_{\beta}}\|T_{\lambda}(g)\|_{\dot{K}^{\alpha,q_{2}}_{p_{2}(\cdot)}(\mathbb{R}^{n})}\\
&\qquad+\left\|\sup_{Q}\frac{1}{|Q|^{\beta/n}}
3C_{b, g, Q}-C_{b, g, Q}(x_{Q})\right\|_{\dot{K}^{\alpha,q_{2}}_{p_{2}(\cdot)}}\\
&\lesssim\left\|\sup_{Q}\frac{1}{|Q|^{1+\beta/n+\lambda/n}}
\int_{Q}|T_{\lambda}((b-b_{Q})g_{1})(y)|dy\right\|_{\dot{K}^{\alpha,q_{2}}_{p_{1}(\cdot)}}\\
&\qquad+\left\|\sup_{Q}\frac{1}{|Q|^{\beta/n+\lambda/n}}
3C_{b, g, Q}-C_{b, g, Q}(x_{Q})\right\|_{\dot{K}^{\alpha,q_{2}}_{p_{1}(\cdot)}}\\
&\qquad+\|b\|_{\dot{\Lambda}_{\beta}}\|g\|_{\dot{K}^{\alpha,q_{1}}_{p_{1}(\cdot)}(\mathbb{R}^{n})}\\
&=:{\rm{J}}_{21}+{\rm{J}}_{22}+\|b\|_{\dot{\Lambda}_{\beta}}\|g\|_{\dot{K}^{\alpha,q_{1}}_{p_{1}(\cdot)}(\mathbb{R}^{n})}.
\end{align*}

Below, we estimate ${\rm{J}}_{21}$. Chooser $r$ and $\bar{r}$ such that $1<r<p_{1}^-$ and $(1/r-1/\bar{r})=(\lambda/n)$. Such $\bar{r}$~exists, since $r<p_{1}^-<n/\lambda$, it follows that
\begin{equation*}
\begin{aligned}
&\frac{1}{|Q|^{1+\beta/n+\lambda/n}}\int_{Q}|T_{\lambda}((b-b_{Q})g_{1})(y)|dy\\
&\lesssim\frac{1}{|Q|^{1+\beta/n+\lambda/n}}\|T_{\lambda}((b-b_{Q})g_{1})\|_{\bar{r}}|Q|^{1/\bar{r}'}\\
&\lesssim|Q|^{-1-(\lambda+\beta)/n+1-1/\bar{r}}\|(b-b_{Q})g\|_{r}\\
&\lesssim\|b\|_{\dot{\Lambda}_{\beta}}(M(|g|^{r}))^{1/r}.
\end{aligned}
\end{equation*}
So, by the boundedness of $M$ from $\dot{K}^{\alpha, q_{2}}_{p_{1}(\cdot)}(\mathbb{R}^{n})$ to $\dot{K}^{\alpha, q_{2}}_{p_{1}(\cdot)}(\mathbb{R}^{n})$,
we obtain ${\rm{J_{21}}}\lesssim\|b\|_{\dot{\Lambda}_{\beta}}\|g\|_{\dot{K}^{\alpha,q_{2}}_{p_{1}(\cdot)}(\mathbb{R}^{n})}$.
Notice that if $q_{1}\leq q_{2}$, then
\begin{equation*}
\dot{K}^{\alpha,q_{1}}_{p(\cdot)}(\mathbb{R}^{n})\subseteq \dot{K}^{\alpha,q_{2}}_{p(\cdot)}(\mathbb{R}^{n}).
\end{equation*}
This implies
$${\rm{J}}_{21}\lesssim\|b\|_{\dot{\Lambda}_{\beta}}\|g\|_{\dot{K}^{\alpha, q_{1}}_{p_{1}(\cdot)}(\mathbb{R}^{n})}.$$

To show $\rm{J_{22}}$, we repeat this derivation of $\rm{F_{22}}$ in Theorem \ref{H1} and obtain
\begin{equation*}
\begin{aligned}
\sup_{Q}\frac{1}{|Q|^{\beta/n+\lambda/n}}3C_{b, g, Q}-C_{b, g, Q}(x_{Q})
\lesssim\|b\|_{\dot{\Lambda}_{\beta}}M(g)(x).
\end{aligned}
\end{equation*}
Similar to the finally estimate of $\rm{J_{21}}$, we also have $${\rm{J_{22}}}\lesssim\|b\|_{\dot{\Lambda}_{\beta}}\|g\|_{\dot{K}^{\alpha,q_{1}}_{p_{1}(\cdot)}(\mathbb{R}^{n})}.$$

Summarizing the  estimates of ${\rm J_{1}}$ and ${\rm J_{2}}$, we conclude that (i) of Theorem \ref{H2} holds.
To verify (ii) of Theorem \ref{H2}, by \eqref{eq-H3} and
\begin{equation*}
K^{\alpha,q_{1}}_{p(\cdot)}(\mathbb{R}^{n})\subseteq K^{\alpha,q_{2}}_{p(\cdot)}(\mathbb{R}^{n})\quad \text{for all} \ q_{1}\leq q_{2},
\end{equation*}
we repeat the process of proof of (i) and obtain (ii).
This ends the proof of Theorem \ref{H2}.
\end{proof}

\section{Some classical operators}
\quad
It is worth that we assumed the boundedness of sublinear operators on the variable exponent Lebesgue space in the discussion of Theorem \ref{H1} and Theorem \ref{H2}. In order to ensure that some classical operators satisfy this fact, we introduce the following  $\log$-H\"{o}lder continuity condition.

\begin{defn}\label{Hod}
\begin{enumerate}
\item[\rm (i)]
A function $p: \mathbb{R}^{n}\to\mathbb{R}$  is locally $\log$-H\"{o}lder continuous on $\mathbb{R}^{n}$ if  there exists $c_{1}>0$ such that
\begin{equation*}
|p(x)-p(y)|\leq\frac{c_{1}}{\log(e+1/|x-y|)} \quad \text{for all} \ x, y\in\mathbb{R}^{n}.
\end{equation*}

\item[\rm (ii)]
A function $p: \mathbb{R}^{n}\to\mathbb{R}$  is $\log$-H\"{o}lder continuous at the origin (or has a $\log$ decay at the origin) if there exists $c_{2}>0$ such that
\begin{equation*}
|p(x)-p(0)|\leq\frac{c_{2}}{\log(e+1/|x|)} \quad \text{for all} \ x\in\mathbb{R}^{n}.
\end{equation*}
The notation $\mathscr{P}^{\log}_{0}(\mathbb{R}^{n})$ is used for all exponents $p(\cdot)\in\mathscr{P}(\mathbb{R}^{n})$ which is $\log$-H\"{o}lder continuous at the origin.

\item[\rm (iii)]
A function $p: \mathbb{R}^{n}\to\mathbb{R}$  is $\log$-H\"{o}lder continuous  at infinity (or has a $\log$ decay  at infinity)
if there exists $c_{3}>0$ such that
\begin{equation*}
|p(x)-p_{\infty}|\leq\frac{c_{3}}{\log(e+|x|)} \quad \text{for all} \ x\in\mathbb{R}^{n},
\end{equation*}
where $p_{\infty}=\lim_{|x|\to\infty}p(x)$.
The notation $\mathscr{P}^{\log}_{\infty}(\mathbb{R}^{n})$ is used for all exponents $p(\cdot)\in\mathscr{P}(\mathbb{R}^{n})$
which is $\log$-H\"{o}lder continuous at infinity.
By $\mathscr{P}^{\log}(\mathbb{R}^{n})$ we define the set of all exponents $p(\cdot)\in\mathscr{P}(\mathbb{R}^{n})$ which is locally $\log$-H\"{o}lder continuous on $\mathbb{R}^{n}$ and has a $\log$ decay  at infinity.
\end{enumerate}
\end{defn}

Obviously, we have $\mathscr{P}^{\log}(\mathbb{R}^{n})\subset(\mathscr{P}^{\log}_{0}(\mathbb{R}^{n})\cap\mathscr{P}^{\log}_{\infty}(\mathbb{R}^{n}))$.
Note that $p(\cdot)\in\mathscr{P}^{\log}(\mathbb{R}^{n})$  if and only if $p'(\cdot)\in\mathscr{P}^{\log}(\mathbb{R}^{n})$,
where $1/p'(\cdot)=1-1/p(\cdot)$.
If $p(\cdot)\in\mathscr{P}^{\log}(\mathbb{R}^{n})$, then the Hardy-Littlewood maximal operator $M$ is bounded on $L^{p(\cdot)}(\mathbb{R}^{n})$
(see \cite[Theorem 4.3.8]{DHHR}), which further implies $p(\cdot)\in\mathscr{B}(\mathbb{R}^{n})$.

Given a locally integrable function $K$ defined on $\mathbb{R}^{n}\backslash\{0\}$, suppose that the Fourier transform of $K$ is bounded,
$$
|K(x)|\leq\frac{C}{|x|^{n}}\quad \text{and} \quad |\nabla K(x)|\leq\frac{C}{|x|^{n+1}} \quad \text{for all} \ x\neq0.
$$
Then the classical Calder\'{o}n-Zygmund singular integral operator $T$ is defined by
\begin{align}\label{def-T}
Tf(x)=K\ast f(x)\quad \text{for all} \ x\in\rn.
\end{align}
Based on the above definition, the following lemma comes from \cite[Corollary 2.5]{CFMP}.
\begin{lem}\label{lem-T-b}
Let $p(\cdot)\in\mathscr{B}(\mathbb{R}^{n})$ and $T$ be as in \eqref{def-T}. Then $T$ is bounded on $L^{p(\cdot)}(\mathbb{R}^{n})$.
\end{lem}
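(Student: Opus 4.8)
The plan is to reduce the boundedness of the Calder\'on--Zygmund operator $T$ on $L^{p(\cdot)}(\mathbb{R}^{n})$ to a weighted norm inequality for $T$ and then invoke the extrapolation machinery for variable Lebesgue spaces. Concretely, I would first record the classical fact that the kernel hypotheses on $K$ (boundedness of $\hat K$, the size bound $|K(x)|\le C|x|^{-n}$ and the gradient bound $|\nabla K(x)|\le C|x|^{-n-1}$) force $T$ to be a standard Calder\'on--Zygmund operator in the sense of the $T1$-theorem: it is $L^{2}(\mathbb{R}^{n})$-bounded (from the $L^\infty$ bound on $\hat K$ via Plancherel) and its kernel satisfies the standard size and H\"ormander-type smoothness estimates (the latter is obtained by integrating the gradient bound along the segment from $y$ to $y'$). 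Consequently, by the classical Coifman--Fefferman theory, $T$ is bounded on $L^{r}(w\,dx)$ for every $r\in(1,\infty)$ and every Muckenhoupt weight $w\in A_{r}$, with operator norm depending only on $r$, $n$, the $A_{r}$ constant of $w$, and the Calder\'on--Zygmund constants of $T$.

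With that weighted estimate in hand, the second step is to pass to the variable exponent setting. Since $p(\cdot)\in\mathscr{B}(\mathbb{R}^{n})$, the maximal operator $M$ is bounded on $L^{p(\cdot)}(\mathbb{R}^{n})$, and hence — recalling the remark in the excerpt that $p(\cdot)\in\mathscr{B}(\mathbb{R}^{n})$ is equivalent to $p'(\cdot)\in\mathscr{B}(\mathbb{R}^{n})$ — $M$ is bounded on $L^{p'(\cdot)}(\mathbb{R}^{n})$ as well. This two-sided boundedness of $M$ is exactly the hypothesis needed for the Rubio de Francia extrapolation theorem on variable Lebesgue spaces: any operator satisfying the family of weighted $L^{r_0}(w)$ bounds for all $w\in A_{r_0}$ (for some fixed $r_0$) is automatically bounded on $L^{p(\cdot)}(\mathbb{R}^{n})$. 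Applying this with $r_0$ arbitrary (say $r_0=2$) and the operator $T$ yields the desired conclusion. This is precisely the route taken in \cite{CFMP}, whose Corollary 2.5 is cited, so the proof amounts to quoting that corollary after verifying its hypotheses are met by our $K$.

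Alternatively, and perhaps more cleanly for a self-contained write-up, I would simply observe that the kernel conditions placed on $K$ are exactly the definition of a Calder\'on--Zygmund operator used in \cite{CFMP}, so that Lemma~\ref{lem-T-b} is a verbatim restatement of \cite[Corollary 2.5]{CFMP} once one checks that $\mathscr{B}(\mathbb{R}^{n})$ coincides with the class of exponents considered there (which follows from the standing assumption $p^{-}>1$, $p^{+}<\infty$ together with $M$ bounded on $L^{p(\cdot)}$). In that presentation the proof is a single sentence: ``This is \cite[Corollary 2.5]{CFMP}.''

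The main obstacle, such as it is, lies entirely in the bookkeeping of the first step: one must confirm that the hypotheses ``$\hat K$ bounded, $|K(x)|\lesssim|x|^{-n}$, $|\nabla K(x)|\lesssim|x|^{-n-1}$'' genuinely imply the standard Calder\'on--Zygmund kernel regularity (H\"ormander condition) and $L^2$-boundedness, since the extrapolation input requires a bona fide Calder\'on--Zygmund operator. The $L^2$ bound is immediate from Plancherel; the delicate point is that the pointwise gradient estimate must be integrated correctly to produce the smoothness estimate $|K(x-y)-K(x-y')|\lesssim |y-y'|\,|x-y|^{-n-1}$ valid for $|x-y|\ge 2|y-y'|$, and one should be a little careful that the segment joining $x-y$ to $x-y'$ stays away from the origin so the gradient bound applies along it. Once this is checked the remaining steps are entirely standard and no further difficulty arises.
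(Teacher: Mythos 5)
Your proposal is correct and coincides with the paper's treatment: the paper simply quotes \cite[Corollary 2.5]{CFMP}, and your argument (weighted $L^{r}(w)$ bounds for Calder\'on--Zygmund operators plus Rubio de Francia extrapolation on variable Lebesgue spaces, using that $M$ is bounded on both $L^{p(\cdot)}$ and $L^{p'(\cdot)}$) is exactly the content and proof of that corollary. No gap.
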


By the condition $|K(x)|\leq C/(|x|^{n})$ for $x\neq0,$ we deduce that $T$ satisfies \eqref{sub-def} and \eqref{size-1}.
In addition, it is obvious that the Hardy-Littlewood maximal operator $M$ satisfies \eqref{sub-def} and \eqref{size-1}.
Thus, the following corollary follows from Theorem \ref{H1} and Lemma \ref{lem-T-b}.
\begin{cor}
Let $\alpha\in \mathbb{R}$, $0<\beta<1$, $0<q\leq\infty$, $\delta_{1}, \delta_{2}\in(0,1)$ satisfy (\ref{1.1}) and $-n\delta_{1}<\alpha<n\delta_{2}$.
Assume that $b\in \dot{\Lambda}_{\beta}$ and $T$ is as in \eqref{def-T}. Then the following assertions hold:
\begin{enumerate}
\item[\rm (i)]
If $p(\cdot)\in\mathscr{B}(\mathbb{R}^{n})$, then $[b, T]$ and $[b, M]$ are bounded from $\dot{K}^{\alpha, q}_{p(\cdot)}(\mathbb{R}^{n})$ to $\dot{K}^{\alpha,q}_{p(\cdot)}\dot{F}^{\beta}_{\infty}(\mathbb{R}^{n})$.
\item[\rm (ii)]
If $p(\cdot)\in\mathscr{B}(\mathbb{R}^{n})$, then $[b, T]$ and $[b, M]$ are bounded from $K^{\alpha, q}_{p(\cdot)}(\mathbb{R}^{n})$ to $K^{\alpha,q}_{p(\cdot)}\dot{F}^{\beta}_{\infty}(\mathbb{R}^{n})$.
\end{enumerate}
\end{cor}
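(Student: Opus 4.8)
The plan is to derive this corollary directly from Theorem \ref{H1} by verifying that the Calder\'{o}n--Zygmund operator $T$ in \eqref{def-T} and the Hardy--Littlewood maximal operator $M$ both satisfy the three hypotheses required there: sublinearity \eqref{sub-def}, the size condition \eqref{size-1}, and $L^{p(\cdot)}(\mathbb{R}^{n})$-boundedness. Once these are in place, parts (i) and (ii) of the corollary are immediate instances of parts (i) and (ii) of Theorem \ref{H1} with the same parameters $\alpha,\beta,q,p(\cdot),\delta_{1},\delta_{2}$.

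First I would check the hypotheses for $T$. The $L^{p(\cdot)}$-boundedness is exactly Lemma \ref{lem-T-b}, which applies since $p(\cdot)\in\mathscr{B}(\mathbb{R}^{n})$. For sublinearity, $Tf=K\ast f$ is in fact linear, and $|T(f+g)|\le|Tf|+|Tg|$ together with $|\lambda Tf|=|T(\lambda f)|$ gives \eqref{sub-def} trivially (reading the inequalities with absolute values as in the paper's convention). For the size condition, if $x\notin\supp f$ then the convolution integral $Tf(x)=\int_{\mathbb{R}^{n}}K(x-y)f(y)\,dy$ runs only over $y\ne x$, so the kernel bound $|K(x-y)|\le C|x-y|^{-n}$ yields $|Tf(x)|\le C\int_{\mathbb{R}^{n}}|x-y|^{-n}|f(y)|\,dy$, which is \eqref{size-1}.

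Next I would check the hypotheses for $M$. The $L^{p(\cdot)}$-boundedness of $M$ is precisely the defining property of $p(\cdot)\in\mathscr{B}(\mathbb{R}^{n})$. Sublinearity of $M$ is classical: from $|f+g|\le|f|+|g|$ pointwise one gets $M(f+g)\le Mf+Mg$, and $M(\lambda f)=|\lambda|Mf$ gives the scalar inequality. Finally, if $x\notin\supp f$ and $r>0$, then $\int_{B(x,r)}|f(y)|\,dy=\int_{B(x,r)\cap\supp f}|f(y)|\,dy$, and on this set $|x-y|<r$, hence $r^{-n}\int_{B(x,r)}|f|\le \int_{B(x,r)}|x-y|^{-n}|f(y)|\,dy\le\int_{\mathbb{R}^{n}}|x-y|^{-n}|f(y)|\,dy$; taking the supremum over $r>0$ gives \eqref{size-1} for $M$ (this is the standard fact that $Mf(x)\le C\,Tf(x)$ off the support, with $T$ the operator on the right of \eqref{size-1}).

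With all hypotheses verified, I would simply invoke Theorem \ref{H1}: for $b\in\dot{\Lambda}_{\beta}$ with $0<\beta<1$, and under $-n\delta_{1}<\alpha<n\delta_{2}$ and $p(\cdot)\in\mathscr{B}(\mathbb{R}^{n})$, the commutators $[b,T]$ and $[b,M]$ are bounded from $\dot{K}^{\alpha,q}_{p(\cdot)}(\mathbb{R}^{n})$ to $\dot{K}^{\alpha,q}_{p(\cdot)}\dot{F}^{\beta}_{\infty}(\mathbb{R}^{n})$, giving (i), and from $K^{\alpha,q}_{p(\cdot)}(\mathbb{R}^{n})$ to $K^{\alpha,q}_{p(\cdot)}\dot{F}^{\beta}_{\infty}(\mathbb{R}^{n})$, giving (ii). There is essentially no obstacle here; the only point that needs a word of care is confirming that the paper's sign conventions in \eqref{sub-def} are genuinely met by $T$ and $M$ (they are, since both operators produce nonnegative or absolute-value quantities), so that Theorem \ref{H1} applies verbatim. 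The gradient bound $|\nabla K(x)|\le C|x|^{-n-1}$ in the definition of $T$ is not needed for this corollary; it is only the size bound on $K$ that is used.
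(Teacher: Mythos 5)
Your proposal is correct and follows exactly the route the paper takes: verify that $T$ and $M$ satisfy \eqref{sub-def}, \eqref{size-1}, and the $L^{p(\cdot)}(\mathbb{R}^{n})$-boundedness (via Lemma \ref{lem-T-b} for $T$ and the definition of $\mathscr{B}(\mathbb{R}^{n})$ for $M$), then invoke Theorem \ref{H1}. The paper merely asserts these verifications in the paragraph preceding the corollary, so your more explicit checks (in particular the off-support estimate $r^{-n}\int_{B(x,r)}|f|\le\int|x-y|^{-n}|f(y)|\,dy$ for $M$) are a welcome but inessential elaboration of the same argument.
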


For any $0<\lambda<n$ and $x\in\rn$,  the Riesz potential operator $I_{\lambda}$ is defined by
\begin{align*}
I_{\lambda}f(x)=\int_{\rn}\frac{|f(y)|}{|x-y|^{n-\lambda}}dy.
\end{align*}
Then the Riesz potential operator $I_{\lambda}$ satisfies \eqref{sub-def} and \eqref{size-2}.
The following lemma about the boundedness of $I_{\lambda}$ on the variable exponent Lebesgue space comes from \cite[Theorem 6.1.9]{DHHR}.
\begin{lem}\label{lem-I-b}
Let $0<\lambda<n$, $p_{1}(\cdot)\in\mathscr{P}^{\log}(\mathbb{R}^{n})$ and $1/p_{2}(\cdot)=1/p_{1}(\cdot)-\lambda/n$.
If $1<p_{1}^{-}\leq p_{1}^{+}<n/\lambda$, then $I_{\lambda}$ is bounded from $L^{p_{1}(\cdot)}(\mathbb{R}^{n})$ to $L^{p_{2}(\cdot)}(\mathbb{R}^{n})$.
\end{lem}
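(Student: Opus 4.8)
This is the Hardy--Littlewood--Sobolev inequality in the variable exponent setting, and a complete proof is contained in \cite[Theorem 6.1.9]{DHHR}; the plan is to recall the standard Hedberg-type argument. Throughout we may assume, by homogeneity, that $\|f\|_{L^{p_{1}(\cdot)}(\rn)}=1$ and that $f\not\equiv0$, and it then suffices to prove $\rho_{p_{2}(\cdot)}(I_{\lambda}f)\ls1$, since $p_{2}^{+}<\infty$ converts this modular bound into $\|I_{\lambda}f\|_{L^{p_{2}(\cdot)}(\rn)}\ls1$.

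The first step is a pointwise estimate. Fix $x\in\rn$ and for $R>0$ write
\[
I_{\lambda}f(x)=\int_{|x-y|<R}\frac{|f(y)|}{|x-y|^{n-\lambda}}\,dy+\int_{|x-y|\ge R}\frac{|f(y)|}{|x-y|^{n-\lambda}}\,dy=:A_{R}(x)+B_{R}(x).
\]
Splitting $\{|x-y|<R\}$ into dyadic annuli $\{2^{-j-1}R\le|x-y|<2^{-j}R\}$ and using \eqref{def-M} gives $A_{R}(x)\ls R^{\lambda}Mf(x)$. For $B_{R}(x)$, the generalized H\"older inequality in $L^{p_{1}(\cdot)}(\rn)$ yields
\[
B_{R}(x)\ls\|f\|_{L^{p_{1}(\cdot)}(\rn)}\,\bigl\|\chi_{\{|x-\cdot|\ge R\}}\,|x-\cdot|^{-(n-\lambda)}\bigr\|_{L^{p_{1}'(\cdot)}(\rn)},
\]
and the hypothesis $p_{1}^{+}<n/\lambda$ is exactly what makes the last norm finite (the tail integral $\int_{|y|\ge R}|y|^{-(n-\lambda)p_{1}'(y)}\,dy$ converges because $(n-\lambda)p_{1}'(\cdot)>n$); using the local and infinite $\log$-H\"older continuity of $p_{1}(\cdot)$ to replace $p_{1}'(\cdot)$ by its values near $x$ and at infinity, this norm is $\ls R^{\lambda-n/p_{1}(x)}$. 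Choosing $R$ with $A_{R}(x)\sim B_{R}(x)$, i.e. $R^{n/p_{1}(x)}\sim Mf(x)^{-1}$, and using the identity $1-\lambda p_{1}(x)/n=p_{1}(x)/p_{2}(x)$ coming from $1/p_{2}(\cdot)=1/p_{1}(\cdot)-\lambda/n$, we obtain the Hedberg bound
\[
I_{\lambda}f(x)\ls\bigl(Mf(x)\bigr)^{p_{1}(x)/p_{2}(x)}\quad\text{for all }x\in\rn .
\]

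The second step transfers this to the modular level. Raising the last inequality to the power $p_{2}(x)$ (the resulting constant is bounded since $p_{2}^{+}<\infty$) gives $|I_{\lambda}f(x)|^{p_{2}(x)}\ls(Mf(x))^{p_{1}(x)}$, hence
\[
\rho_{p_{2}(\cdot)}(I_{\lambda}f)=\int_{\rn}|I_{\lambda}f(x)|^{p_{2}(x)}\,dx\ls\int_{\rn}\bigl(Mf(x)\bigr)^{p_{1}(x)}\,dx=\rho_{p_{1}(\cdot)}(Mf).
\]
Since $p_{1}(\cdot)\in\mathscr{P}^{\log}(\rn)$, $M$ is bounded on $L^{p_{1}(\cdot)}(\rn)$ by \cite[Theorem 4.3.8]{DHHR}, so $\|Mf\|_{L^{p_{1}(\cdot)}(\rn)}\ls\|f\|_{L^{p_{1}(\cdot)}(\rn)}=1$, and $p_{1}^{+}<\infty$ then gives $\rho_{p_{1}(\cdot)}(Mf)\ls1$. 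Combining the two displays yields $\rho_{p_{2}(\cdot)}(I_{\lambda}f)\ls1$, which completes the argument.

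The main obstacle is the estimate of $B_{R}(x)$ in the first step: one must bound $\|\chi_{\{|x-\cdot|\ge R\}}|x-\cdot|^{-(n-\lambda)}\|_{L^{p_{1}'(\cdot)}(\rn)}$ by $R^{\lambda-n/p_{1}(x)}$ with a constant uniform in $x$ and $R$. The convergence of the relevant tail integral forces precisely the hypothesis $p_{1}^{+}<n/\lambda$, while the $\log$-H\"older continuity of $p_{1}(\cdot)$ (locally and at infinity) is what licenses replacing the variable exponent by its pointwise value, so that optimizing in $R$ yields the clean Hedberg inequality; the same $\log$-H\"older hypothesis underlies, via \cite[Theorem 4.3.8]{DHHR}, the passage from the modular estimate to the norm estimate in the second step.
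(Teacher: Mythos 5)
The paper offers no proof of Lemma \ref{lem-I-b}: it is imported verbatim from \cite[Theorem 6.1.9]{DHHR}, so your opening citation already reproduces everything the paper does. The Hedberg sketch you append, however, has a genuine gap at precisely the step you single out as the main obstacle. The uniform bound
\[
\bigl\|\chi_{\{|x-\cdot|\ge R\}}\,|x-\cdot|^{-(n-\lambda)}\bigr\|_{L^{p_{1}'(\cdot)}(\mathbb{R}^{n})}\lesssim R^{\lambda-n/p_{1}(x)}
\]
is false for large $R$: when $R\gg1$ the modular $\int_{|x-y|\ge R}|x-y|^{-(n-\lambda)p_{1}'(y)}\,dy$ is governed by the values of $p_{1}'$ near infinity, so the norm behaves like $R^{\lambda-n/(p_{1})_{\infty}}$, and the $\log$-H\"older decay at infinity does not force $p_{1}(x)=(p_{1})_{\infty}$ at a fixed $x$. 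Whenever $p_{1}(x)<(p_{1})_{\infty}$ the discrepancy factor $R^{\,n/p_{1}(x)-n/(p_{1})_{\infty}}$ is unbounded as $R\to\infty$, and the large-$R$ regime cannot be sidestepped, because your optimization chooses $R\sim Mf(x)^{-p_{1}(x)/n}$, which is large exactly where $Mf(x)$ is small. Consequently the clean pointwise inequality $I_{\lambda}f(x)\lesssim(Mf(x))^{p_{1}(x)/p_{2}(x)}$ does not follow as written.

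The standard repair (this is how \cite{DHHR} and Capone--Cruz-Uribe--Fiorenza actually proceed) is to prove the Hedberg estimate with an additional integrable error term, e.g.
\[
|I_{\lambda}f(x)|^{p_{2}(x)}\lesssim\bigl(Mf(x)\bigr)^{p_{1}(x)}+(e+|x|)^{-m}
\]
for a suitable $m>n$; the extra term absorbs the mismatch between $p_{1}(x)$ and $(p_{1})_{\infty}$ and still yields $\rho_{p_{2}(\cdot)}(I_{\lambda}f)\lesssim1$. With that correction your second step goes through unchanged via the boundedness of $M$ on $L^{p_{1}(\cdot)}(\mathbb{R}^{n})$; note only that passing from $\rho_{p_{2}(\cdot)}(g)\lesssim1$ to $\|g\|_{L^{p_{2}(\cdot)}(\mathbb{R}^{n})}\lesssim1$ rests on $p_{2}^{-}\geq1$ rather than on $p_{2}^{+}<\infty$.
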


Correspondingly, for any $0<\lambda<n$, the fractional maximal function $M_{\lambda}$ is defined by
\begin{align*}
M_{\lambda}f(x):=\sup_{r>0}r^{\lambda-n}\int_{B(x,r)}|f(y)|dy \quad \text{for all} \ x\in\rn.
\end{align*}
Then the fractional maximal function $M_{\lambda}$ also satisfies \eqref{sub-def} and \eqref{size-2}. Moreover, we have
$$M_{\lambda}(f)\ls I_{\lambda}(|f|).$$
This, along with Theorem \ref{H2} and Lemma \ref{lem-I-b}, yields the following corollary.

\begin{cor}
Let $\alpha\in \mathbb{R}$, $0<q_{1},q_{2}<\infty$, $1<p_{1}^{-}\leq p_{1}^{+}<n/\lambda$ and $1/p_{2}(\cdot)=1/p_{1}(\cdot)-\lambda/n$. Suppose that $\delta_{1}, \delta_{2}\in(0,1)$ satisfy (\ref{1.1}) and $-n\delta_{1}<\alpha<n\delta_{2}$.
Assume further that $b\in \dot{\Lambda}_{\beta}$ with $0<\beta<1$. Then the following assertions hold:
\begin{enumerate}
\item[\rm (i)]
If $p_{1}(\cdot)\in\mathscr{P}^{\log}(\mathbb{R}^{n})$, then $[b, I_{\lambda}]$ and $[b, M_{\lambda}]$ are bounded
from $\dot{K}^{\alpha,q_{1}}_{p_{1}(\cdot)}(\mathbb{R}^{n})$
to $\dot{K}^{\alpha,q_{2}}_{p_{2}(\cdot)}\dot{F}^{\beta}_{\infty}(\mathbb{R}^{n})$.
\item[\rm (ii)]
If $p_{1}(\cdot)\in\mathscr{P}^{\log}(\mathbb{R}^{n})$, then $[b, I_{\lambda}]$ and $[b, M_{\lambda}]$ are bounded
from $K^{\alpha,q_{1}}_{p_{1}(\cdot)}(\mathbb{R}^{n})$
to $K^{\alpha,q_{2}}_{p_{2}(\cdot)}\dot{F}^{\beta}_{\infty}(\mathbb{R}^{n})$.
\end{enumerate}
\end{cor}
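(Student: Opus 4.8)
The plan is to deduce both (i) and (ii) directly from Theorem \ref{H2}, by checking that each of the operators $T_\lambda\in\{I_\lambda,M_\lambda\}$ satisfies the three hypotheses of that theorem: the sublinearity \eqref{sub-def}, the size condition \eqref{size-2}, and the boundedness from $L^{p_1(\cdot)}(\rn)$ to $L^{p_2(\cdot)}(\rn)$. The parameter restrictions on $\alpha,\lambda,q_1,q_2,\delta_1,\delta_2$ required by Theorem \ref{H2} are already part of the hypotheses of the corollary, so the only preliminary point is the membership $p_1(\cdot),p_2(\cdot)\in\mathscr{B}(\rn)$.

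For this preliminary point, $p_1(\cdot)\in\mathscr{P}^{\log}(\rn)$ gives $p_1(\cdot)\in\mathscr{B}(\rn)$ at once. Since $1/p_2(\cdot)=1/p_1(\cdot)-\lambda/n$ with $\lambda/n$ constant, one has $1/p_2^-=1/p_1^--\lambda/n<1$ and $1/p_2^+=1/p_1^+-\lambda/n>0$ because $1<p_1^-\le p_1^+<n/\lambda$, hence $p_2(\cdot)\in\mathscr{P}(\rn)$; moreover the local $\log$-Hölder continuity and the $\log$ decay at infinity of $p_1(\cdot)$ pass to $p_2(\cdot)$ (the map $t\mapsto t/(1-\lambda t/n)$ is Lipschitz on the range $[p_1^-,p_1^+]$ of $p_1(\cdot)$), so $p_2(\cdot)\in\mathscr{P}^{\log}(\rn)\subset\mathscr{B}(\rn)$.

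Next I would verify the operator hypotheses. For $I_\lambda$, the sublinearity \eqref{sub-def} is just the triangle inequality inside the integral, and the size condition \eqref{size-2} is automatic because $I_\lambda f(x)=\int_{\rn}|f(y)||x-y|^{-(n-\lambda)}\,dy$ is already the right-hand side of \eqref{size-2}; the boundedness of $I_\lambda$ from $L^{p_1(\cdot)}(\rn)$ to $L^{p_2(\cdot)}(\rn)$ is precisely Lemma \ref{lem-I-b}, whose hypotheses $1<p_1^-\le p_1^+<n/\lambda$ and $p_1(\cdot)\in\mathscr{P}^{\log}(\rn)$ are assumed. For $M_\lambda$, the sublinearity follows immediately from the definition; for $x\notin\supp f$ and $r>0$, every $y\in B(x,r)\cap\supp f$ satisfies $r^{\lambda-n}\le|x-y|^{\lambda-n}$ (here $0<\lambda<n$), so $M_\lambda f(x)\le\int_{\rn}|f(y)||x-y|^{-(n-\lambda)}\,dy$ and \eqref{size-2} holds; and the pointwise bound $M_\lambda(f)\ls I_\lambda(|f|)$ together with Lemma \ref{lem-I-b} yields the boundedness of $M_\lambda$ from $L^{p_1(\cdot)}(\rn)$ to $L^{p_2(\cdot)}(\rn)$.

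Having checked all the hypotheses of Theorem \ref{H2} for $T_\lambda=I_\lambda$ and for $T_\lambda=M_\lambda$, an application of that theorem with $b\in\dot{\Lambda}_{\beta}$ gives that $[b,I_\lambda]$ and $[b,M_\lambda]$ are bounded from $\dot{K}^{\alpha,q_1}_{p_1(\cdot)}(\rn)$ to $\dot{K}^{\alpha,q_2}_{p_2(\cdot)}\dot{F}^{\beta}_{\infty}(\rn)$, which is (i), and from $K^{\alpha,q_1}_{p_1(\cdot)}(\rn)$ to $K^{\alpha,q_2}_{p_2(\cdot)}\dot{F}^{\beta}_{\infty}(\rn)$, which is (ii). I do not expect any genuine obstacle: the content is entirely the verification that $I_\lambda$ and $M_\lambda$ fit the abstract framework of Theorem \ref{H2}, and the only mildly delicate step is confirming $p_2(\cdot)\in\mathscr{B}(\rn)$, which comes down to the stability of the $\log$-Hölder conditions under adding the constant $\lambda/n$ to $1/p_1(\cdot)$.
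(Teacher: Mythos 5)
Your proposal is correct and follows essentially the same route as the paper: verify \eqref{sub-def} and \eqref{size-2} for $I_{\lambda}$ and $M_{\lambda}$, get the $L^{p_{1}(\cdot)}(\mathbb{R}^{n})\to L^{p_{2}(\cdot)}(\mathbb{R}^{n})$ boundedness from Lemma \ref{lem-I-b} (using $M_{\lambda}(f)\lesssim I_{\lambda}(|f|)$ for the fractional maximal operator), and then apply Theorem \ref{H2}. Your explicit verification that $p_{2}(\cdot)\in\mathscr{B}(\mathbb{R}^{n})$ is a detail the paper leaves implicit, and it is a worthwhile addition since Theorem \ref{H2} does require it.
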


{\bf Acknowledgments:}
Supported by the National Natural Science Foundation of xinjiang Province of China (No:2021D01C463).

Chenglong Fang

School of Mathematics, Renmin University of China, Beijing 100872, China

\smallskip

{\it E-mail}: \texttt{fangclmath@126.com }

\vspace{0.3cm}

Yingying Wei

School of Mathematics and Statistics, Yili Normal University, Yining Xinjiang 835000, China

\smallskip

{\it E-mail}: \texttt{wyy552@126.com }

\vspace{0.3cm}

Jing Zhang

School of Mathematics and Statistics, Yili Normal University, Yining Xinjiang 835000, China

\smallskip

{\it E-mail}: \texttt{zjmath66@126.com}

\end{document}